\documentclass[12pt]{amsart}%
\usepackage{graphicx}
\usepackage{epsfig}
\usepackage{amscd}
\usepackage{amsmath}
\usepackage{amsfonts}
\usepackage{amssymb}
\usepackage[latin1]{inputenc}%
\usepackage[colorlinks=true, allcolors=blue]{hyperref}
\usepackage{indentfirst}
\providecommand{\U}[1]{\protect\rule{.1in}{.1in}}
\newtheorem{theorem}{Theorem}[section]

\newtheorem{corollary}[theorem]{Corollary}

\newtheorem{lemma}[theorem]{Lemma}
\newtheorem{notation}[theorem]{Notation}

\newtheorem{proposition}[theorem]{Proposition}
\newtheorem{remark}[theorem]{Remark}

\newcommand{\supp}{\text{\rm Supp}}

\begin{document}

\title{Identities and isomorphisms of finite graded matrix algebras}

\author[Silva]{Diogo Diniz}
\thanks{D. Diniz was partially supported by FAPESQ grant No.~3099/2021 and CNPq grant No.~304328/2022-7}
\address{Universidade Federal de Campina Grande (UFCG) \\
	Departamento de Matem\'{a}tica \\
	58429-970 Campina Grande, PB, Brazil}
\email{diogo@mat.ufcg.edu.br}

\author[Martinez Correa]{Daniela Martinez Correa}

\thanks{D. Martinez Correa was supported by FAPESP, grant no.~2024/01338-0 and 2025/11605-8.}
\address{Universidade de S\~ao Paulo\\ Instituto de Matem\'atica e Estat\'istica e Ci\^encia da Computa\c{c}\~ao, 05508-090 S\~ao Paulo, SP, Brazil}
\email{danielam.correa@ime.usp.br}

\author[Gon\c{c}alves]{Dimas Jos\'e Gon\c{c}alves}
\thanks{D. J. Gon\c{c}alves was partially supported by FAPESP grant No.~2024/14914-9}
\address{Universidade Federal de S\~ao Carlos (UFSCar) \\
Departamento de Matem\'{a}tica \\
13565-905 S\~ao Carlos, SP, Brazil}
\email{dimasjg@ufscar.br}

\author[Koshlukov]{Plamen Koshlukov}
\thanks{P. Koshlukov was partially supported by FAPESP grant No.~2024/14914-9 and by CNPq grant No.~307184/2023-4}
\address{Universidade Estadual de Campinas (UNICAMP) \\
Departamento de Matem\'{a}tica \\
13083-859 Campinas, SP,  Brazil}
\email{plamen@unicamp.br}

	\date{}

    \begin{abstract}
    Let $\mathbb{F}$ be a finite field and let $G$ be a group such that every finite subgroup of $G$ is cyclic. In this paper we classify the division gradings by cyclic groups on matrix algebras over $\mathbb{F}$. This yields the classification of the $G$-gradings on these algebras. Moreover if $G$ is abelian we prove that two matrix algebras over $\mathbb{F}$ with $G$-gradings satisfy the same graded identities if and only if they are isomorphic as graded algebras. 
    \end{abstract}

\subjclass[2000]{16R10, 16R50, 16S35, 16S50, 16W50}

\keywords{Graded polynomial identities, gradings on simple algebras, algebras over finite fields, crossed product}
	
	\maketitle

\section{Introduction}
 
The polynomial identities of simple algebras play an important role in the study of algebraic structures. A fundamental result in this direction is the well known Amitsur--Levitzki Theorem that says that the standard polynomial of degree $2n$ is an identity of minimal degree for the algebra of $n\times n$ matrices over a field. An interesting application of this result can be found in \cite{BGGT}. An immediate consequence of the Amitsur--Levitzki Theorem is that finite dimensional simple algebras, over an algebraically closed field, are determined up to isomorphism by their polynomial identities. If the field is not algebraically closed then the result may not hold, for example, the real algebras $\mathbb{H}$ of the quaternions and $M_2(\mathbb{R})$ satisfy the same polynomial identities but are not isomorphic. If the base field is finite, however, the finite dimensional simple algebras are determined by their polynomial identities, see Proposition \ref{ordinarymain} in Section \ref{prelim}.

Graded algebras and graded polynomial identities play a fundamental role in Kemer's Representability Theorem which was a key step in his solution to the Specht problem. In particular the structure theory of finite-dimensional graded algebras and  graded simple algebras plays an important role in the Representability Theorem. In this context a natural question arises: whether the graded identities determine the finite dimensional graded simple algebras up to isomorphism. The answer to this question is affirmative if the base field is algebraically closed, see \cite{AH}, \cite{K} and \cite{KZ}. 
 The analogous question for upper block-triangular matrix algebras was studied in \cite{RD}. This problem was solved for non-associative algebras and other algebraic structures, see \cite{BY}, \cite{BY2} and \cite{BD}.

The polynomial identities of $n\times n$ matrix algebras have been described for $n=2$ only if the base field is infinite and of characteristic different from $2$, however over a finite field the description has been made for $n=2,3,4$, we refer the reader to the papers \cite{G}, \cite{GP}, \cite{Ko} and \cite{MK} for more details. The graded polynomial identities for matrix algebras of order two over a finite field were described in \cite{KS}. In the present paper we prove that the gradings by a group $G$ on matrix algebras over a finite field are determined up to isomorphism by their graded identities provided that $G$ is abelian and every finite subgroup of $G$ is cyclic. Here we recall that the latter restriction is not artificial. For example, such is the multiplicative group of every field. Clearly if $G$ is such a group then every subgroup of $G$ satisfies the above property. On the other hand direct products of such groups, and also homomorphic images, may fail to have the same property. 
The description of the finite groups such that all of their proper subgroups are cyclic is contained in the 1903 paper \cite{miller_moreno}. There exist various infinite groups with this property. Recall that Olshanskii \cite{olshanskii} constructed examples of infinite groups such that every subgroup is of finite prime order. In order to prove our result we classify the division gradings on matrix algebras over finite fields whose support is a finite cyclic group. This yields the classification of the $G$-gradings on matrix algebras over finite fields. Over algebraically closed fields the graded division algebras are exactly the twisted group algebras with the canonical grading. The gradings on matrix algebras over an algebraically closed field were classified by Bahturin, Sehgal and Zaicev, see \cite{BSZ} and \cite{BZ}. 

The paper is organized as follows:
in Section \ref{prelim} we present the main notions and results on ordinary and graded polynomial identities that will be used in the paper. In Section \ref{prelimcrossed} we present the results on crossed products that will be needed in the remaining sections. In Section \ref{divisiongradings} we classify the division gradings on matrix algebras that have cyclic support. In Section \ref{example} we present an example of a grading by a cyclic group of order $2$ on $M_2(\mathbb{F})$, where $\mathbb{F}$ is the field with two elements, that makes the classification results in Section \ref{divisiongradings} and the results in the last section of the paper easier to understand. In Section \ref{gradingsmatrix} we present the classification of the $G$-gradings on $M_n(\mathbb{F})$, where $\mathbb{F}$ is a finite field and $G$ is a group for which every finite subgroup is cyclic. In Section \ref{gradedpoly} we obtain the main result of the paper, Theorem \ref{main}, that says that if $G$ is also abelian then the $G$-gradings on matrix algebras in Section \ref{gradingsmatrix} are determined, up to isomorphism, by their graded identities.

 \section{Preliminaries on polynomial identities}\label{prelim}

Let $\mathbb{F}$ be a finite field with $q$ elements and $p=\mathrm{char}\, \mathbb{F}$. Henceforth the algebras we work with are over $\mathbb{F}$. In this section we present the definitions and basic results about polynomial identities, both ordinary and graded, that will be needed in the remaining sections of the paper.

Let $X=\{x_1, x_2,  \ldots\}$ be a countable set of indeterminates and let $\mathbb{F}\langle X\rangle$ be the free associative algebra, freely generated by  $X$, over $\mathbb{F}$. We say that a polynomial $f(x_1,\ldots, x_n)\in \mathbb{F}\langle X\rangle$ is a polynomial identity for an associative algebra $R$, if $f(r_1,\ldots, r_n)=0$ for all $r_1$, \dots, $r_n\in R$. The polynomial identities of $R$ form a $T$-ideal of $\mathbb{F}\langle X\rangle$, that is an ideal closed under all endomorphism of $\mathbb{F}\langle X\rangle$. We denote by $Id(R)$ the T-ideal of polynomial identities of $R$.

We say that a polynomial $f=f(x_1, \ldots,  x_n)\in\mathbb{F}\langle X\rangle$ is a central polynomial for an associative algebra $R$ if $f$ has no constant term, $f(r_1, \ldots,  r_n)$ belongs to the centre of $R$ for all $r_1$, \dots, $r_n\in R$, and $f\notin Id(R)$. It is well known that $f(x_1,x_2)=[x_1,x_2]^2$ is a central polynomial for the algebra of  the $2\times 2$ matrices with entries in $\mathbb{F}$, $M_2(\mathbb{F})$. A natural question in this direction was posed by Kaplansky in 1956: Does there exist a multihomogeneous central polynomial for the matrix algebra $M_k(\mathbb{F})$, $k>2$. It was solved by means of direct constructions provided independently by Razmyslov \cite{Ra} and by Formanek \cite{F}, with the following result.

\begin{theorem}[E. Formanek \cite{F}] \label{teoremadeformanek}
For each integer $n\geq 2$ there exists a polynomial \linebreak$F_n=F_n(x_1,y_1,\dots, y_n)\in \mathbb{Z}\langle x_1, y_1,\dots, y_n\rangle$ with the following properties:
\begin{enumerate}
\item[(a)] $F_n$ is homogeneous of degree $n^2-n$ in $x_1$ and multilinear in $y_1$, \dots, $y_n$.
			
\item[(b)] If $\mathbb{K}$ is a field, $F_n$ is a central polynomial for $M_n(\mathbb{K})$.
\end{enumerate}
\end{theorem}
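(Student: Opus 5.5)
This is Formanek's theorem, and I would reproduce his construction. First I reduce to the case of an algebraically closed field $\overline{\mathbb{K}}\supseteq \mathbb{K}$. Since $F_n$ is multilinear in $y_1,\dots,y_n$ and the values of $F_n$ are polynomial in the entries of the arguments, a polynomial map vanishing identically over $\overline{\mathbb{K}}$ vanishes over $\mathbb{K}$. So centrality over $\overline{\mathbb{K}}$ gives centrality over $\mathbb{K}$. For non-vanishing over a small (for instance finite) field I will exhibit explicit matrix units or diagonal matrices with integer entries at which the value is a nonzero scalar. Centrality is also needed over the prime fields, so I work with integer coefficients throughout and choose the construction so that the relevant scalar is $\pm1$.

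The construction is as follows. Take the commutative polynomial
\[
g(t_1,\dots,t_{n+1})=\prod_{i=2}^{n}(t_1-t_i)(t_{n+1}-t_i)\prod_{2\le i<j\le n}(t_i-t_j)^2\in\mathbb{Z}[t_1,\dots,t_{n+1}],
\]
and write $g=\sum \alpha_{(k)} t_1^{k_1}\cdots t_{n+1}^{k_{n+1}}$. Set
\[
F_n=\sum_{\sigma\in S_n}\sum \alpha_{(k)}\, x_1^{k_1}y_{\sigma(1)}x_1^{k_2}y_{\sigma(2)}\cdots y_{\sigma(n)}x_1^{k_{n+1}}.
\]
Property (a) is a degree count. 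The total degree of $g$ is $2(n-1)+(n-1)(n-2)=n^2-n$, and each monomial of $F_n$ contains each $y_i$ exactly once.

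For (b), I use multilinearity in the $y$'s. It suffices to evaluate at a diagonal matrix $x_1=\mathrm{diag}(\lambda_1,\dots,\lambda_n)$ with distinct $\lambda$'s, which is Zariski dense up to conjugation, and with $y_j=e_{a_jb_j}$ matrix units. Then $x_1^{k_1}e_{a_1b_1}x_1^{k_2}\cdots$ equals $\lambda_{a_1}^{k_1}\lambda_{b_1}^{k_2}\cdots$ times the product of the units. This product is nonzero only if the units form a path $e_{i_1i_2}e_{i_2i_3}\cdots e_{i_ni_{n+1}}$, and the coefficient is then $g(\lambda_{i_1},\dots,\lambda_{i_{n+1}})$. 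Now $g$ vanishes unless $i_2,\dots,i_n$ are pairwise distinct and both $i_1$ and $i_{n+1}$ differ from all of them. Since there are only $n$ indices, this forces $i_1=i_{n+1}$, so the output is diagonal. The scalar multiple of $e_{i_1i_1}$ is $g(\lambda_{i_1},\lambda_{i_2},\dots,\lambda_{i_n},\lambda_{i_1})$, which is symmetric in the permutation. Summing over $\sigma$, each diagonal entry equals the same symmetric expression $\sum_{\tau}g(\lambda_{\tau(1)},\dots,\lambda_{\tau(n)},\lambda_{\tau(1)})$ times the multiplicity with which the chosen units form a cycle starting at that index. The step I expect to be the main obstacle is making this uniform in the starting index, so that the matrix is scalar. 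Formanek's trick is to compare the trace-like symmetrization: by multilinearity the value on $y_j$ is a linear combination of $\mathrm{tr}$-type expressions. I would verify directly that for a given set of units forming an $n$-cycle, every rotation occurs in the $\sigma$-sum, so each index on the cycle receives the same scalar. Hence the value is scalar on a dense set, and therefore everywhere by continuity in the Zariski sense.

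Finally, for non-vanishing, take $x_1=\mathrm{diag}(\lambda_1,\dots,\lambda_n)$ and $y_j=e_{j,j+1}$ (indices mod $n$). The scalar obtained is a product of squares of differences $\lambda_i-\lambda_j$ up to sign. Over a field with at least $n$ elements, choosing distinct $\lambda$'s makes it nonzero. Over smaller finite fields I would instead substitute for $x_1$ a matrix with distinct eigenvalues in an extension, for example a companion matrix of an irreducible polynomial of degree $n$. The non-vanishing then holds after extending scalars, and since $F_n$ has coefficients in $\mathbb{Z}$ and the value lies in $M_n(\mathbb{K})$, it is a nonzero element of $M_n(\mathbb{K})$.
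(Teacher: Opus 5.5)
The paper gives no proof of this theorem; it is quoted from Formanek's paper. What you have written is essentially Formanek's original argument, and it is correct: take $x_1$ diagonal with distinct eigenvalues and matrix units for the $y_j$, use conjugation-equivariance plus Zariski density, and the key identity is $g(t_1,\dots,t_n,t_1)=\prod_{1\le i<j\le n}(t_i-t_j)^2$.

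Two spots need tightening.

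First, in the scalar step there is no sum over $\tau$. An ordering $\sigma$ contributes only when it traverses a directed cycle through all $n$ indices, and its contribution is exactly $\prod_{i<j}(\lambda_i-\lambda_j)^2\, e_{i_1,i_1}$. When the chosen units are the edges of such a cycle, each vertex is the starting point of exactly one ordering. Hence $F_n(\mathrm{diag}(\lambda_1,\dots,\lambda_n),e_{a_1,b_1},\dots,e_{a_n,b_n})$ is either $0$ or $\prod_{i<j}(\lambda_i-\lambda_j)^2 I_n$. No trace argument is needed, and the fact that this multiplicity is exactly $1$ is what makes the argument work in every characteristic.

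Second, in the small-field case, your stated reason for getting a nonzero evaluation on $M_n(\mathbb{K})$ (integer coefficients, value lying in $M_n(\mathbb{K})$) is not the right one. The arguments $Pe_{i,i+1}P^{-1}$ live in $M_n(\mathbb{L})$, and over a finite field a polynomial can be an identity of $M_n(\mathbb{K})$ without being one of $M_n(\mathbb{L})$. The correct reason is that $x_1=C$ already lies in $M_n(\mathbb{K})$ and $F_n(C,\cdot)$ is $\mathbb{K}$-multilinear in the $y_j$. Its $\mathbb{L}$-multilinear extension is nonzero, so $F_n(C,e_{a_1,b_1},\dots,e_{a_n,b_n})\neq 0$ for some matrix units $e_{a,b}$ of $M_n(\mathbb{K})$.
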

We will call the polynomial $F_n$ in Theorem \ref{teoremadeformanek} Formanek's polynomial. With respect to item (b), we observe that
\begin{equation}\label{24denovembro1}F_n(M_n(\mathbb{K}))= \{F_n(X_1,Y_1,\dots, Y_n)\mid X_1,Y_1,\dots, Y_n \in M_n(\mathbb{K})\}=\mathbb{K}I_n,
\end{equation}
where $I_n$ is the identity matrix in $M_n(\mathbb{K})$. A very important open problem in PI theory is to determine the least degree of a central polynomial for the matrix algebras $M_n(\mathbb{K})$; we refer to \cite[Section 7.3]{D} for a discussion on the topic, and also for references concerning the applications of central polynomials in PI theory. 

We recall that for a positive integer $m$ the standard polynomial of degree $m$ is the following polynomial \[s_m=s_m(x_1,\ldots ,x_m)=\sum_{\sigma\in Sym(m)}(\mathrm{sign}\, \sigma)x_{\sigma(1)}\cdots x_{\sigma(m)},\]
where $Sym(m)$ is the symmetric group of $\{1,\ldots ,m\}$ and $\mathrm{sign}\, \sigma$ is the sign of the permutation $\sigma\in Sym(m)$. 
	
\begin{theorem}[S. A. Amitsur, J. Levitzki \cite{AL}] \label{teoremaamitsurlevitzki5deoutubro}
Over any field $\mathbb{K}$, the standard polynomial of degree $2n$ is an identity for $M_n(\mathbb{K})$. Moreover, $M_n(\mathbb{K})$ does not satisfy polynomial identities of lower degree. Furthermore, up to a multiplicative constant, $s_{2n}$ is the only multilinear polynomial identity of degree $2n$ for $M_n(\mathbb{K})$, apart from the case when $|\mathbb{K}|=2$ and $n=2$.
\end{theorem}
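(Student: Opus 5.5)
Since this is the classical Amitsur--Levitzki theorem, I would follow the standard characteristic-free route. The statement has three parts: $s_{2n}$ is an identity; there are no identities of lower degree; and $s_{2n}$ is unique among multilinear identities of degree $2n$. I treat them in that order.

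\textbf{Step 1: $s_{2n}$ is an identity.} Since $s_{2n}$ is multilinear, it suffices to evaluate it on matrix units $e_{ij}$. Its coefficients are integers, so it suffices to prove $s_{2n}(A_1,\dots,A_{2n})=0$ in $M_n(\mathbb{Z})$, hence in $M_n(\mathbb{Q})$. I would then use Rosset's argument.
\begin{enumerate}
\item[(i)] Let $E$ be the Grassmann algebra over $\mathbb{Q}$ with generators $e_1,\dots,e_{2n}$, and put $X=\sum_i A_ie_i\in M_n(E)$. Then
\[X^{2n}=s_{2n}(A_1,\dots,A_{2n})\,e_1\cdots e_{2n}.\]
\item[(ii)] The matrix $Y=X^2$ has entries in the even part $E_0$, which is commutative.
\item[(iii)] For $k\ge 1$ we have $\mathrm{tr}(Y^k)=\mathrm{tr}(X\cdot X^{2k-1})=-\mathrm{tr}(X^{2k-1}\cdot X)$, because the entries of $X$ and of $X^{2k-1}$ are odd and therefore anticommute. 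Hence $\mathrm{tr}(Y^k)=0$.
\item[(iv)] Over the commutative $\mathbb{Q}$-algebra $E_0$, Newton's identities show that all coefficients of the characteristic polynomial of $Y$ vanish. Cayley--Hamilton then gives $Y^n=0$, that is, $X^{2n}=0$.
\end{enumerate}
This proves the first claim over every field $\mathbb{K}$.

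\textbf{Step 2: no identities of lower degree.} Let $f$ be a nonzero multilinear polynomial of degree $m\le 2n-1$. After renaming variables, $f$ contains the monomial $x_1\cdots x_m$ with nonzero coefficient. Substitute the staircase
\[x_1=e_{11},\ x_2=e_{12},\ x_3=e_{22},\ x_4=e_{23},\ \dots,\]
truncated at length $m$. Only the identity ordering gives a nonzero product, so $f$ evaluates to a nonzero multiple of a single matrix unit and is not an identity.

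For a general nonzero identity $f$ of degree $m<2n$, I would reduce to the multilinear case. Pass to a nonzero multihomogeneous component (components are identities because the evaluation map is compatible with the grading; over a finite field, if needed, first pass to a finite extension). Then multilinearize, choosing a leading monomial so that the full linearization stays nonzero. Alternatively, I would substitute generic scalar multiples $\lambda_i x$ along the staircase and read off a nonvanishing coefficient. I expect this reduction over small finite fields to need care: partial linearization can lose information in characteristic $p$.

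\textbf{Step 3: uniqueness in degree $2n$.} Let $f=\sum_\sigma \alpha_\sigma x_{\sigma(1)}\cdots x_{\sigma(2n)}$ be a multilinear identity. For $\sigma=1$, substitute the staircase of length $2n-1$ followed by one extra unit, for example
\[e_{11},e_{12},\dots,e_{n-1,n},e_{nn},e_{n1}.\]
The nonzero products then correspond to the closed paths in this small graph, and reading off the $(1,1)$ entry gives linear relations among the $\alpha_\sigma$. Varying the position of a loop $e_{ii}$ in the substitution, I would show that $\alpha_{\sigma\tau}=-\alpha_\sigma$ for every adjacent transposition $\tau$. This forces $\alpha_\sigma=\mathrm{sign}(\sigma)\,\alpha_1$, so $f=\alpha_1 s_{2n}$.

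The main obstacle is this bookkeeping. Each substitution typically yields a relation of the shape $\alpha_\sigma+\alpha_{\sigma\tau}+(\text{other terms})=0$, and one must choose substitutions so that only two terms survive. In the case $n=2$, $|\mathbb{K}|=2$ this isolation fails, and that failure is consistent with the stated exception, where an additional identity exists.
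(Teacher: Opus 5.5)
The paper gives no proof of this theorem; it only quotes it from \cite{AL}, so your proposal has to stand on its own. Step~1 (Rosset's Grassmann-algebra argument) is correct, and so is the staircase argument for multilinear polynomials of degree $<2n$.

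The genuine gap is the reduction of an arbitrary identity to a multilinear one in Step~2, which is exactly where finiteness of $\mathbb{K}$ matters. Over a finite field the multihomogeneous components of an identity need not be identities. For example, $\prod_{i=1}^{n}(x^{q^i}-x)$ is an identity of $M_n(\mathbb{F}_q)$, since the minimal polynomial of every matrix in $M_n(\mathbb{F}_q)$ divides it, but its lowest component $(-1)^n x^n$ is not. Passing to a finite extension does not help: identities of $M_n(\mathbb{K})$ are not preserved under extension of scalars ($x^q-x$ vanishes on $\mathbb{F}_q$ but not on $\mathbb{F}_{q^2}$). Nor are there ``generic'' scalars $\lambda_i$ in $\mathbb{K}$.

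The fix is to linearize without ever extracting components:
\begin{enumerate}
\item First replace $f$ successively by $f-f|_{x_i=0}$ (whenever this is nonzero), so that every variable occurs in every monomial.
\item If a variable, say $x_1$, has degree $\geq 2$ in some monomial, pass to $f(x_1+y,x_2,\dots)-f(x_1,x_2,\dots)-f(y,x_2,\dots)$. This is again an identity, its degree is at most $\deg f$, and every variable still occurs in every monomial.
\item It is nonzero in the free algebra, because distinct pairs (monomial, nonempty proper set of $x_1$-positions replaced by $y$) give distinct words.
\item The number of variables increases at each step and is bounded by $\deg f$. So you end with a nonzero multilinear identity of degree $\leq m<2n$, which the staircase rules out.
\end{enumerate}
Linearization itself loses nothing in characteristic $p$; the illegitimate step is taking components.

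In Step~3 the decisive substitution is missing. Your cyclic substitution only gives $\alpha_w+\alpha_{\rho w}=0$, where $\rho$ moves the first letter of the word to the end. Moving loops around that cycle does not isolate a transposition. You need a vertex carrying two loops that is visited only once, for instance
\[
e_{21},\ e_{11},\ e_{11},\ e_{12},\ e_{23},\ e_{33},\ \dots,\ e_{n-1,n},\ e_{nn}
\]
(only the first four units when $n=2$). Its $(2,n)$-entry receives exactly two words, differing by the swap of the two loops. This gives $\alpha_{\tau w}=-\alpha_w$ for $\tau$ the transposition of positions $2,3$. Conjugating by $\rho$ gives all adjacent transpositions, hence $\alpha_\sigma=\mathrm{sign}(\sigma)\,\alpha_{\mathrm{id}}$.

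Your explanation of the exception is also wrong. Matrix-unit substitutions produce relations with $0/1$ coefficients that do not depend on $\mathbb{K}$, so the argument runs verbatim for $n=2$, $|\mathbb{K}|=2$. In particular $M_2(\mathbb{F}_2)$ has no multilinear identity of degree $4$ other than multiples of $s_4$. The theorem simply makes no claim in that case, so there is nothing to explain there.
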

	
Now, we use these two theorems to prove the ``ordinary" version of our main result (Theorem \ref{main}).
	
\begin{proposition}\label{ordinarymain}
Let $R$ and $R^{\prime}$ be finite simple algebras over $\mathbb{F}$. Then $Id(R)=Id(R^{\prime})$ if and only if $R$ and $R^{\prime}$ are isomorphic as $\mathbb{F}$-algebras.
\end{proposition}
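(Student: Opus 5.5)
By Wedderburn's theorem, a finite simple algebra $R$ over $\mathbb{F}$ is isomorphic to $M_n(D)$ for a finite division algebra $D$. By Wedderburn's little theorem $D$ is a finite field $\mathbb{K}\supseteq\mathbb{F}$, say $|\mathbb{K}|=q^m$. So $R\cong M_n(\mathbb{F}_{q^m})$, and similarly $R'\cong M_{n'}(\mathbb{F}_{q^{m'}})$. One direction is trivial, since isomorphic algebras have the same identities. For the converse, assume $Id(R)=Id(R')$. The plan is to read off $n$ from the identities, and then $m$.

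\textbf{Recovering $n$.} Both $R$ and $R'$ are algebras over the field $\mathbb{F}$. The multilinear identities of $M_n(\mathbb{K})$ regarded as an $\mathbb{F}$-algebra coincide with those of $M_n(\mathbb{K})$ as a $\mathbb{K}$-algebra: a multilinear polynomial with coefficients in $\mathbb{F}$ vanishes on $M_n(\mathbb{K})$ if and only if it vanishes on the matrix units $e_{ij}$, and that condition does not depend on $\mathbb{K}$. By Theorem \ref{teoremaamitsurlevitzki5deoutubro}, $s_{2n}\in Id(R)$ and $R$ has no identity of degree less than $2n$. Hence $2n$ is the minimal degree of a nonzero identity, in particular of a multilinear one. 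This quantity is determined by $Id(R)$, so $n=n'$.

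\textbf{Recovering $m$.} Now put $\mathbb{K}=\mathbb{F}_{q^m}$ and $\mathbb{K}'=\mathbb{F}_{q^{m'}}$. Use Formanek's polynomial $F_n$ from Theorem \ref{teoremadeformanek}, reduced modulo $p$, together with \eqref{24denovembro1}: the set of values $F_n(M_n(\mathbb{K}))$ equals $\mathbb{K}I_n$. Consider the polynomial
\[
g_k=F_n(x_1,y_1,\dots,y_n)^{q^k}-F_n(x_1,y_1,\dots,y_n).
\]
Every element of $\mathbb{K}I_n$ satisfies $z^{q^k}=z$ exactly when $\mathbb{F}_{q^m}\subseteq\mathbb{F}_{q^k}$, that is, when $m\mid k$. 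Since $F_n$ attains every scalar $\lambda I_n$ with $\lambda\in\mathbb{K}$, we get $g_k\in Id(M_n(\mathbb{K}))$ if and only if $m\mid k$. The smallest $k\ge1$ with $g_k\in Id(R)$ is therefore $m$. Equality of the T-ideals gives $m=m'$, so $R\cong R'$.

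\textbf{Main obstacle.} The only delicate point is the first step, namely making sure the minimal-degree statement is used correctly for $\mathbb{F}$-identities of a $\mathbb{K}$-algebra. The reduction to multilinear identities handles this, and standard multilinearization shows that any identity of degree $d$ yields a nonzero multilinear identity of degree at most $d$. The second step relies on $F_n$ having integer coefficients, so that its image in $\mathbb{F}\langle X\rangle$ makes sense, and on the fact that \eqref{24denovembro1} holds over every field.
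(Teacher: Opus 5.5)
Your proposal is correct and follows essentially the same route as the paper's proof. Both reduce to $M_n(\mathbb{F}_{q^m})$ via Wedderburn's theorems, get $n=n'$ from Amitsur--Levitzki, and detect the field size with $F_n^{q^k}-F_n$. The differences are cosmetic: you characterize $m$ as the least $k\ge 1$ with $g_k\in Id(R)$, whereas the paper argues by contradiction assuming $r<r'$. You also spell out why Amitsur--Levitzki applies to $\mathbb{F}$-identities of a $\mathbb{K}$-algebra, which the paper leaves implicit.
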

\begin{proof}  
Suppose that $Id(R)=Id(R^{\prime})$. 
By Wedderburn's theorem, there exist a positive integer $n$ and a division algebra $\mathbb{L}$ over $\mathbb{F}$ such that $R \cong M_n(\mathbb{L})$, see \cite[Theorem 2.61]{B}. 
Since $R$ is finite it follows that $\mathbb{L}$ is also finite. Since $\mathbb{L}$ is a finite division ring, Wedderburn's little theorem implies that $\mathbb{L}$ is a field, 
see \cite[Theorem 8.23]{R}.	Therefore $R\cong M_n(\mathbb{L})$ and $R' \cong M_{n^{\prime}}(\mathbb{L}^{\prime})$, where $n$, $n^{\prime}$ are positive integers and $\mathbb{L}$, $\mathbb{L}^{\prime}$ are finite extensions of $\mathbb{F}$. Now, Theorem \ref{teoremaamitsurlevitzki5deoutubro} implies that $n=n^{\prime}$. 

Let $r$, $r^{\prime}$ be positive integers such that $| \mathbb{L} |=q^{r}$ and $| \mathbb{L}^{\prime}|=q^{r^{\prime}}$. We claim that $r=r^{\prime}$. Indeed assume on the contrary that $r\neq r^{\prime}$, and without loss of generality suppose that $r<r^{\prime}$. By Theorem \ref{teoremadeformanek} and (\ref{24denovembro1}),  $F_n(R)=\mathbb{L}I_n$, where $I_n$ is the identity matrix in $M_n(\mathbb{L})$. Hence $F_n^{q^r}-F_{n}$ is a polynomial identity for $R$. Since $F_n(R^{\prime})=\mathbb{L}^{\prime}I_n$ and $r<r^{\prime}$, we conclude that $F_n^{q^r}-F_{n}$ is not an identity for $R^{\prime}$, a contradiction. Hence $r=r^{\prime}$, and this implies that $\mathbb{L}$ and $\mathbb{L}^{\prime}$ are splitting fields of the polynomial $x^{q^r}-x$ over $\mathbb{F}$, therefore \cite[Theorem 3.131]{R} implies that $\mathbb{L}\cong \mathbb{L}^{\prime}$. 
We conclude that $M_n(\mathbb{L})\cong M_n(\mathbb{L}^{\prime})$ and this implies that $R\cong R^{\prime}$. 

The converse is clear, namely if $R\cong R^{\prime}$ then $Id(R)=Id(R^{\prime})$.
\end{proof}

Finally, we recall briefly some basic notions about graded algebras and graded polynomial identities. Let $G$ be a group and consider $R$ an associative algebra. Then $R$ is a $G$-graded algebra whenever $R=\oplus_{g\in G} R_g$ is a direct sum of vector subspaces such that $R_gR_h\subseteq R_{gh}$, for every $g$,  $h\in G$. The elements of $\cup_{g\in G}R_g$ are called homogeneous and a non-zero element of $R_g$ is homogeneous of degree $g$. The support of the $G$-grading on $R$ is defined as the set 
\[
\supp(R) =\{g\in G \mid R_g\neq 0\}.
\] 
The grading $R=\oplus_{g\in G} R_g$  is a division grading if every non-zero homogeneous element is invertible in $R$. A subspace  $I\subseteq R$ is called graded (or homogeneous) if $I=\oplus_{g\in G} I\cap R_g$. 
If $I\subseteq R$ is an ideal and a graded subspace, we call it a graded ideal. In this case, the quotient $R/I$ inherits from $R$ a natural structure of $G$-graded algebra.

Let $R=\oplus_{g\in G} R_g$ and $R'=\oplus_{g\in G} R'_g$ be two $G$-graded algebras. The map $f\colon R\rightarrow R'$ is a homomorphism of $G$-graded algebras if $f$ is a homomorphism of algebras such that $f(R_g)\subseteq R'_g$ for every $g\in G$. Similarly one defines endomorphism, automorphism and isomorphism of $G$-graded algebras. Note that $R$ and $R'$ are isomorphic as $G$-graded algebras whenever there is an isomorphism of algebras from $R$ to $R'$ which respects the gradings.

The free $G$-graded associative algebra $\mathbb{F}\langle X \rangle_G$ over $\mathbb{F}$ is the free associative algebra freely generated by the set of variables $X=\cup_{g\in G} X_g$, where $X_g=\{x_{i,g}|i\geq 1\}$ are countable pairwise disjoint sets, with the $G$-grading in which the variables in $X_g$ are homogeneous of degree $g$. A polynomial $f\in\mathbb{F}\langle X\rangle_G$ is a $G$-graded polynomial identity for a $G$-graded associative algebra $R$ if vanishes under all graded substitutions, i.e., for any substitutions of the variables $x_{i,g}$ by elements of the homogeneous component $R_g$. We denote by $Id_{G}(R)$ the subset of $\mathbb{F}\langle X\rangle_G$ formed by all $G$-graded polynomial identities of $R$. It is well known that $Id_G(R)$ is a $G$-graded ideal of $\mathbb{F}\langle X\rangle_G$, which is invariant under all $G$-graded endomorphisms of $\mathbb{F}\langle X\rangle _G$; so it is called a $T_G$-ideal.
Clearly graded identities for $A$ are ``easier" to study than the ordinary ones. The former also provide useful information about the latter. Here we recall that if two $G$-graded algebras satisfy the same graded identities then they satisfy the same ordinary ones. Clearly the converse fails.

\section{Preliminaries on Crossed product}\label{prelimcrossed}

In this section, $\mathbb{F}$ is a finite field. We present the construction of crossed product and prove that, since the base field $\mathbb{F}$ is finite, every crossed product is isomorphic to a matrix algebra over $\mathbb{F}$. In the next section we use this fact in order to construct division gradings with cyclic support on $M_m(\mathbb{F})$, and to produce a classification of these gradings.

Let $\mathbb{E}/\mathbb{F}$ be a finite field extension  with Galois
group $G=\mbox{Gal} (\mathbb{E}/\mathbb{F})$. We form a vector space $A$ over $\mathbb{E}$ with basis $\{y_s \mid s\in G\}$, and so its elements have the form
\[\sum_{s\in G}\rho_s y_s, \ \rho_s\in \mathbb{E}.\]
Given a map 
\begin{equation}\label{definicaode2cociclo1}
\begin{array}{cccc}
k:&G\times G& \to &\mathbb{E}^* \\
&(s,t) & \mapsto & k_{s,t},
\end{array}
\end{equation}
define a product on $A$ as follows:
\[\left(\sum_{s\in G}\rho_s y_s\right)\left(\sum_{t\in G}\delta_t y_t\right)=\sum_{s\in G}\sum_{t\in G}k_{s,t}\rho_ss(\delta_t)y_{st}, \ \rho_s, \delta_t\in \mathbb{E}.\]
Equipped with  this product, $A$ becomes an algebra over $\mathbb{F}$,   
\[
y_sy_t=k_{s,t}y_{st}.
\]
Moreover $A$ is an associative algebra if and only if
\begin{equation}\label{definicaode2cociclo2}
    k_{s,t}k_{st,v}=s(k_{t,v})k_{s,tv}
    \end{equation}
for every $s$, $t$, $v\in G$. If (\ref{definicaode2cociclo2}) is satisfied, the map (\ref{definicaode2cociclo1}) is called a 2-cocycle (or a factor set) of $G$ in $\mathbb{E}^*$. From now on we assume (\ref{definicaode2cociclo2}), we call $A$ the crossed product of $\mathbb{E}$ and $G$ with respect to the 2-cocycle $k$,
and we write $A = (\mathbb{E}, G, k)$. 

\begin{theorem}\cite[Theorem 8.7]{J}. \label{teoremacentralsimples1} The crossed product $A=(\mathbb{E}, G, k)$ is a central simple algebra over $\mathbb{F}$. Moreover, $[A:\mathbb{F}]=m^2$ where $[\mathbb{E}:\mathbb{F}]=m$. 
\end{theorem}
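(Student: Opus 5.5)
The statement has two parts: the dimension count $[A:\mathbb{F}]=m^2$, and central simplicity. The dimension is immediate. $A$ is free over $\mathbb{E}$ on the $|G|$ elements $y_s$. Since $\mathbb{E}/\mathbb{F}$ is Galois, $|G|=m$, so $[A:\mathbb{F}]=[A:\mathbb{E}]\,[\mathbb{E}:\mathbb{F}]=m\cdot m=m^2$. Before the main arguments I will record some basic facts. The element $1_A=k_{1,1}^{-1}y_1$ is an identity for $A$; this follows from (\ref{definicaode2cociclo2}) with $s=t=1$ and with $t=v=1$, which give $k_{1,v}=k_{1,1}$ and $k_{s,1}=s(k_{1,1})$. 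The map $\rho\mapsto \rho 1_A$ embeds $\mathbb{E}$ as a subfield of $A$. Each $y_s$ is invertible, because $y_sy_{s^{-1}}=k_{s,s^{-1}}y_1$ is a nonzero $\mathbb{E}$-multiple of the identity. Finally, conjugation by $y_s$ restricts to $s$ on $\mathbb{E}$: $y_s\rho=s(\rho)y_s$.

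\emph{Centre.} Let $z=\sum_s\rho_sy_s$ be central. First I use that $z$ commutes with every $\delta\in\mathbb{E}$. This gives $\rho_s\,s(\delta)=\delta\rho_s$ for all $\delta$. Whenever $s\neq 1$ we can choose $\delta$ with $s(\delta)\neq\delta$, which forces $\rho_s=0$. Hence $z=\rho 1_A$ with $\rho\in\mathbb{E}$. Next I use that $z$ commutes with each $y_s$, which gives $s(\rho)=\rho$ for all $s\in G$. So $\rho\in\mathbb{E}^G=\mathbb{F}$, again by the Galois correspondence, and the centre is $\mathbb{F}$.

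\emph{Simplicity.} This is the main step, and I will do it by a minimal-length argument. Let $I\neq 0$ be a two-sided ideal. Choose a nonzero $z=\sum_{s\in S}\rho_sy_s\in I$ whose support $S$ is as small as possible. Multiplying on the right by $y_{t}^{-1}$ for some $t\in S$, I may assume $1\in S$, and by scaling with an element of $\mathbb{E}$ I may assume $\rho_1=1$. Suppose $|S|>1$ and pick $s\neq 1$ in $S$, together with $\delta\in\mathbb{E}$ such that $s(\delta)\neq\delta$. Then $\delta z-z\delta\in I$. In this element the coefficient of $y_1$ cancels, and the coefficient of $y_s$ is $\rho_s(\delta-s(\delta))\neq 0$. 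So $\delta z-z\delta$ is a nonzero element of $I$ with strictly smaller support, contradicting the minimality of $S$. Therefore $z=y_1$, which is invertible, and so $I=A$.

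The only delicate point is making sure that the supports really shrink in this last step: the $y_1$ term cancels while the $y_s$ term survives, and this rests on the distinctness of the automorphisms in $G$ (Artin/Dedekind independence, or simply the fact that $s\neq 1$ moves some element). Everything else is routine bookkeeping with the cocycle identity (\ref{definicaode2cociclo2}).
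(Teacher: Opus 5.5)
Your proof is correct: the unit $k_{1,1}^{-1}y_1$, the relation $y_s\rho=s(\rho)y_s$, the computation of the centre via $\mathbb{E}^G=\mathbb{F}$, and the minimal-support argument for simplicity all check out. The only thing to add is a left inverse for $y_s$, which comes from $y_{s^{-1}}y_s=k_{s^{-1},s}y_1$ or from finite-dimensionality. The paper gives no proof of its own and simply cites Jacobson; your argument is the standard direct proof of that cited result, and it works for any finite Galois extension, not only over finite fields.
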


\begin{corollary} \label{corolariocentralsimples2} If $[\mathbb{E}:\mathbb{F}]=m$, then $(\mathbb{E}, G, k)\cong M_m(\mathbb{F})$ as algebras over $\mathbb{F}$. 
\end{corollary}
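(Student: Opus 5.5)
We prove Corollary \ref{corolariocentralsimples2} by combining Theorem \ref{teoremacentralsimples1} with the Wedderburn theorems, exactly as in the proof of Proposition \ref{ordinarymain}.

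First, Theorem \ref{teoremacentralsimples1} says that $A=(\mathbb{E},G,k)$ is a central simple $\mathbb{F}$-algebra with $[A:\mathbb{F}]=m^2$. In particular $A$ is finite dimensional over the finite field $\mathbb{F}$, hence $A$ is a finite set. By Wedderburn's structure theorem \cite[Theorem 2.61]{B} there exist a positive integer $n$ and a division algebra $\mathbb{L}$ over $\mathbb{F}$ such that $A\cong M_n(\mathbb{L})$. Since $A$ is finite, $\mathbb{L}$ is a finite division ring. Wedderburn's little theorem \cite[Theorem 8.23]{R} then shows that $\mathbb{L}$ is a field.

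Next we use centrality. The centre of $M_n(\mathbb{L})$ is $\mathbb{L}I_n$, and the isomorphism $A\cong M_n(\mathbb{L})$ is $\mathbb{F}$-linear, so it maps the centre $\mathbb{F}\cdot 1_A$ of $A$ onto $\mathbb{L}I_n$. Comparing $\mathbb{F}$-dimensions gives $[\mathbb{L}:\mathbb{F}]=1$, that is, $\mathbb{L}=\mathbb{F}$. Hence $A\cong M_n(\mathbb{F})$.

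Finally, comparing dimensions gives $n^2=[A:\mathbb{F}]=m^2$, so $n=m$ and $(\mathbb{E},G,k)\cong M_m(\mathbb{F})$. No step is a real obstacle once Theorem \ref{teoremacentralsimples1} is available. The only point needing care is the centrality step: it must be phrased as an identification of $\mathbb{F}$-algebras, so that the centre $\mathbb{L}I_n$ is compared with $\mathbb{F}$ rather than merely shown to be some field.
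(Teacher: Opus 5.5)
Your proposal is correct and follows the paper's proof essentially step by step: Theorem \ref{teoremacentralsimples1}, then Wedderburn's structure theorem, finiteness of $\mathbb{L}$ and Wedderburn's little theorem, then identifying the centre to get $\mathbb{L}=\mathbb{F}$, and finally $n^2=m^2$. Your explicit comparison of $\mathbb{F}$-dimensions of the centres is a slightly more careful rendering of the paper's ``$\mathbb{D}\cong\mathbb{F}$ by Theorem \ref{teoremacentralsimples1}'' step.
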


\begin{proof}
We will use the notations from Theorem \ref{teoremacentralsimples1}. By Wedderburn's theorem, there exist $n\geq 1$ and a division algebra $\mathbb{D}$ over $\mathbb{F}$ such that $A \cong M_n(\mathbb{D})$, see \cite[Theorem 2.61]{B}.
Since $[A:\mathbb{F}]<\infty$ and $\mathbb{F}$ is finite it follows that $A$ is finite and so $\mathbb{D}$ is also finite. Since $\mathbb{D}$ is a finite division ring, Wedderburn's little theorem implies that $\mathbb{D}$ is a field, 
see \cite[Theorem 8.23]{R}. Therefore the centre of $M_n(\mathbb{D})$ is $\mathbb{D}$,  $\mathbb{D} \cong \mathbb{F}$ by Theorem \ref{teoremacentralsimples1}, and $A \cong M_n(\mathbb{F})$. Since $n^2=[A:\mathbb{F}]=m^2$, we obtain $n=m$. 
\end{proof} 
 
\section{Division Gradings on finite Matrix Algebras with Cyclic Support}\label{divisiongradings}

In this section, $\mathbb{F}$ is a finite field with $q$ elements, $G$ is a group with identity element $e$, $T$ is a finite cyclic subgroup of $G$ and $m\geq 1$. We will classify the division $G$-gradings on $M_m(\mathbb{F})$ with support $T$. 

For the particular case $|T|=m$, we will first construct a division $G$-grading on $M_m(\mathbb{F})$ with support $T$ as follows:
\begin{itemize}
\item[(a)]Let $\mathbb{E}/\mathbb{F}$ be a finite field extension  with $[\mathbb{E}:\mathbb{F}]=m$. 
\item[(b)]Since $\mathbb{E}/\mathbb{F}$ is a Galois extension,  $|\mbox{Gal} (\mathbb{E}/\mathbb{F})|=[\mathbb{E}:\mathbb{F}]=m$. 
\item[(c)] If $p$ is the characteristic of $\mathbb{F}$, \cite[Theorem 4.12]{R} implies that Gal$(\mathbb{E}/\mathbb{Z}_p)$ is cyclic and so its subgroup Gal$(\mathbb{E}/\mathbb{F})$ is also cyclic.  
\item[(d)] Since $T$ and Gal$(\mathbb{E}/\mathbb{F})$ are cyclic groups of order $m$, they are isomorphic. Hence we can identify them, see below.
\item[(e)] Let $\psi\colon T\to \mathrm{Gal}(\mathbb{E}/\mathbb{F})$ be a 
group isomorphism, and denote $\psi(u)=\psi_u$.
\item[(f)] Denote by $D_{T, \mathbb{E},\psi}$ the vector space over $\mathbb{E}$ with basis $\{Y_s \mid s\in T\}$. Its elements are of the form
\[
\sum_{s\in T}\rho_s Y_s, \quad \rho_s\in \mathbb{E}.
\]
For the 2-cocycle $k$ of  $\mbox{Gal} (\mathbb{E}/\mathbb{F})$ in $\mathbb{E^*}$ given by $k_{s,t}=1$ for every $s$, $t \in \mbox{Gal} (\mathbb{E}/\mathbb{F})$, there exists an $\mathbb{E}$-linear isomorphism $D_{T, \mathbb{E},\psi} \to A=(\mathbb{E}, \mbox{Gal} (\mathbb{E}/\mathbb{F}), k)$ such that $Y_s \mapsto  y_{\psi_s}$. Thus $A$ induces on $D_{T, \mathbb{E},\psi}$ the structure of an  algebra over $\mathbb{F}$ with multiplication rule given by
\[
\left(\sum_{s\in T}\rho_s Y_s\right)\left(\sum_{t\in T}\delta_t Y_t\right)=\sum_{s\in T}\sum_{t\in T}\rho_s\psi_s(\delta_t)Y_{st}, \quad \rho_s, \delta_t\in \mathbb{E}.
\]
Note that there exists only one division $G$-grading on $D_{T, \mathbb{E},\psi}$ whose support equals $T$, and such that 
\[
\deg (\rho Y_s)=s \quad \mbox{for every} \quad  s\in T, \quad \rho \in \mathbb{E}.
\]
\item[(g)] By Corollary \ref{corolariocentralsimples2}, there exists an isomorphism $\varphi\colon D_{T, \mathbb{E},\psi} \to M_m(\mathbb{F})$ of algebras over $\mathbb{F}$, and so $\varphi$ induces a division $G$-grading on $M_m(\mathbb{F})$ with support $T$, as desired. Note that  
\[ 
\deg (\varphi(\rho Y_s))=s \quad \mbox{for every} \quad  s\in T, \quad \rho \in \mathbb{E}.
\]
\end{itemize}

Before stating one of the main results of this paper, we would like to emphasize the relevance of the recent paper \cite{BDW} and its connection with the theory developed in this work. The pertinent remarks will be given at the end of this section.

\begin{theorem}\label{DivisionMatrix}
	Let $D=M_m(\mathbb{F})$ be a $G$-graded division algebra, over a finite field $\mathbb{F}$, with support $T$. If $T$ is a cyclic group and $\mathbb{E}=D_e$ then:
 \begin{enumerate}
     \item[(a)] $\mathbb{E}$ is a finite field.
     \item[(b)] $|T|=[\mathbb{E}:\mathbb{F}]=m$.
     \item[(c)] There exists an isomorphism $\psi:T\rightarrow \mathrm{Gal} ( \mathbb{E}/\mathbb{F})$ of groups such that $D\cong D_{T, \mathbb{E},\psi}$ as $G$-graded algebras over $\mathbb{F}$.
 \end{enumerate}
\end{theorem}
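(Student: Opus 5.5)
Since every nonzero homogeneous element of $D$ is invertible, $D_e$ is a finite-dimensional division algebra over $\mathbb{F}$ (it is a subalgebra containing $1$, and the inverse of a nonzero $a\in D_e$ lies in $D_e$ by homogeneity). Being finite, $D_e$ is a field by Wedderburn's little theorem \cite[Theorem 8.23]{R}; this gives (a). For each $t\in T$ fix a nonzero $Y_t\in D_t$, with $Y_e=1$. Then $D_t=\mathbb{E}Y_t$: if $a\in D_t$, then $aY_t^{-1}\in D_e$. Hence $\dim_{\mathbb{F}}D=|T|\,[\mathbb{E}:\mathbb{F}]$, that is,
\[
m^2=|T|\,r, \qquad r=[\mathbb{E}:\mathbb{F}].
\]

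Next I define $\psi\colon T\to \mathrm{Aut}(\mathbb{E}/\mathbb{F})$ by $\psi_t(\rho)=Y_t\rho Y_t^{-1}$. This preserves $D_e=\mathbb{E}$ by degree, and it is independent of the choice of $Y_t$ because $\mathbb{E}$ is commutative. The map $\psi$ is a homomorphism, and $Y_sY_t=k_{s,t}Y_{st}$ with $k_{s,t}\in\mathbb{E}^*$. I claim $\psi$ is injective. If $\psi_t=\mathrm{id}$, then $Y_t$ commutes with $\mathbb{E}$. Since $T$ is abelian (cyclic), the powers of $Y_t$ together with $\mathbb{E}$ generate a commutative graded subalgebra $C$, and $C$ is a division algebra: it is a finite domain, being a subring of $D$ which has no zero divisors among its homogeneous elements. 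This alone does not give a contradiction, so the cleaner route is to show the centre of $D$ is $\mathbb{F}$ and argue with that.

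The key step, and the main obstacle, is to show that $\mathbb{E}$ is a maximal subfield, i.e.\ $r=m$; then $|T|=m$ and $\psi$ must be a bijection. My plan is as follows. The centralizer $C_D(\mathbb{E})$ is a graded subalgebra, equal to $\bigoplus_{t\in\ker\psi}\mathbb{E}Y_t$. Let $N=\ker\psi$, which is cyclic, say $N=\langle u\rangle$ of order $d$. Then $Y_u^d\in\mathbb{E}$ commutes with $\mathbb{E}$ and $Y_u$, so $C_D(\mathbb{E})=\mathbb{E}[Y_u]$ is commutative. It is also a finite ring without zero divisors: a finite subring of $D$ that is a quotient of $\mathbb{E}[x]$ of dimension $d$ over $\mathbb{E}$, and it is graded-division and commutative, hence reduced. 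I expect a short argument is needed to show it is a field. Once it is a field, the double centralizer theorem in the central simple algebra $M_m(\mathbb{F})$ gives that a commutative centralizer of a subfield $\mathbb{E}$ equals $\mathbb{E}$, so $d=1$ and $[\mathbb{E}:\mathbb{F}]\cdot\dim_{\mathbb{F}} C_D(\mathbb{E})=m^2$ yields $r^2=m^2$. Even without the field property, $C_D(\mathbb{E})$ is commutative and contains $\mathbb{E}$, so it lies in the centralizer of $C_D(\mathbb{E})$, which is $\mathbb{E}$ again by the double centralizer theorem; this forces $C_D(\mathbb{E})=\mathbb{E}$. Hence $\psi$ is injective, $r=m$, $|T|=m$, and since $|\mathrm{Gal}(\mathbb{E}/\mathbb{F})|=m$, $\psi$ is an isomorphism onto the Galois group; the fixed field of $\psi(T)$ is central, hence equals $\mathbb{F}$. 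This proves (b).

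For (c), write $T=\langle g\rangle$ and set $Y=Y_g$. Then $Y^m$ lies in $\mathbb{E}$, is fixed by $\psi_g$, and therefore lies in $\mathbb{F}^*$. Replacing $Y$ by $\lambda Y$ with $\lambda\in\mathbb{E}^*$ changes $Y^m$ to $N_{\mathbb{E}/\mathbb{F}}(\lambda)Y^m$. The norm map of finite fields is surjective, so $\lambda$ can be chosen with $(\lambda Y)^m=1$. Setting $Y_{g^i}=(\lambda Y)^i$ for $0\le i<m$ gives a trivial cocycle $k\equiv 1$. The $\mathbb{E}$-linear map $D_{T,\mathbb{E},\psi}\to D$ sending $Y_s$ to $Y_s$ is then a graded algebra homomorphism, bijective by dimension count, and so it is the required isomorphism of $G$-graded algebras.
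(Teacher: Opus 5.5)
Your proof is correct. Parts (a) and (c) follow the paper essentially verbatim: Wedderburn's little theorem, $D_t=\mathbb{E}Y_t$, and normalizing $Y^m\in\mathbb{F}^*$ to $1$ via surjectivity of the norm. The genuine difference is in the key step, namely injectivity of $\psi$ and $[\mathbb{E}:\mathbb{F}]=m$.

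\emph{The paper's route.} It takes the basis to be powers $X_{\mathbf{t}^i}=X_{\mathbf{t}}^i$ of one homogeneous element, so all $X_u$ commute with each other. For $u\in\ker\psi$, the element $X_u$ then commutes with $\mathbb{E}$ and with every $X_v$. Hence it is central, so it lies in $\mathbb{F}\subseteq D_e$, forcing $u=e$. Surjectivity comes from the fixed-field argument: the fixed field of $\psi(T)$ is central, so it equals $\mathbb{F}$. Only then does $m^2=|T|\,[\mathbb{E}:\mathbb{F}]=|T|^2$ give $|T|=m$. Your abandoned first attempt was one observation short of this. With $Y_{g^i}=Y_g^i$, a kernel element $Y_t$ also commutes with $Y_g$, so it is already central.

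\emph{Your route.} You compute $C_D(\mathbb{E})=\bigoplus_{t\in\ker\psi}\mathbb{E}Y_t=\mathbb{E}[Y_u]$ and note it is commutative because $\ker\psi$ is cyclic. The double centralizer theorem in $M_m(\mathbb{F})$ then gives $C_D(\mathbb{E})\subseteq C_D(C_D(\mathbb{E}))=\mathbb{E}$ and $r\cdot\dim_{\mathbb{F}}C_D(\mathbb{E})=m^2$. From this you get $\ker\psi=1$ and $r=m$, and then $|T|=m$ and bijectivity of $\psi$ by counting.

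\emph{Comparison.} Your approach imports a heavier result from the theory of central simple algebras. In exchange it shows directly that $\mathbb{E}$ is a maximal subfield and avoids the fixed-field argument. The paper's argument is fully elementary, using only that the centre of $M_m(\mathbb{F})$ is $\mathbb{F}$.

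You should delete two side remarks, which your ``even without the field property'' argument never uses:
\begin{itemize}
\item The claim that $C_D(\mathbb{E})$ is ``a finite ring without zero divisors'' is unjustified, since $D=M_m(\mathbb{F})$ itself has zero divisors.
\item The claim that a commutative graded-division ring is ``hence reduced'' is false in general. The group algebra $\mathbb{F}_2[x]/(x^2-1)$, with $x$ of degree the generator of $\mathbb{Z}_2$, is commutative and graded-division, yet $(1+x)^2=0$.
\end{itemize}
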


    \begin{proof}
Since $\mathbb{E}$ is a finite division ring, Wedderburn's little theorem implies that $\mathbb{E}$ is a field.

Let $\textbf{t}$ be a generator of $T$ and denote $n=|T|$. Consider a nonzero homogeneous element $X_{\textbf{t}}\in D$ of degree $\textbf{t}$, and denote $X_{\textbf{t}^i}:=X_{\textbf{t}}^i$ for $0\leq i <n$.  Note that $D_{u}=\mathbb{E}X_u$ for every $u\in T$ and, in particular, $[D:\mathbb{E}]=n$. For each $u\in T$ the map $\psi_u\colon \mathbb{E} \to \mathbb{E}$ given by 
\[
\psi_u(\delta)= X_u\delta X_u^{-1}, \quad \delta \in \mathbb{E},
\] 
lies in $\mathrm{Gal}\, (\mathbb{E}/\mathbb{F})$. The map $\psi\colon T\to \mathrm{Gal}(\mathbb{E}/\mathbb{F})$ given by $\psi(u)=\psi_u$ is a homomorphism of groups. Indeed let $u$, $v\in T$, since $X_uX_v\in D_{uv}=\mathbb{E}X_{uv}$ there exists a non-zero element $k_{u,v}\in \mathbb{E}$ such that $X_uX_v=k_{u,v}X_{uv}$. For $\delta \in \mathbb{E}$ we have
\[
(\psi_u\psi_v)(\delta)=X_uX_v\delta(X_uX_v)^{-1}=k_{u,v}X_{uv}\delta X_{uv}^{-1}k_{u,v}^{-1}\stackrel{*}{=}X_{uv}\delta X_{uv}^{-1}=\psi_{uv}(\delta),
\]
the equality $(*)$ above holds because $\mathbb{E}$ is commutative. Hence we conclude that $\psi_{uv}=\psi_u\psi_v$ for all $u$, $v\in T$ and as a consequence the map $\psi$ is a homomorphism of groups.

\smallskip

\noindent \emph{Claim 1.} $\psi$ is an isomorphism of groups.

\noindent \emph{Proof of Claim 1.} For every 
$\delta\in \mathbb{E}$ and $u$, $v\in T$ we have
\begin{equation}\label{igualdade6ja}
    X_u\delta=\psi_u(\delta)X_u \ \ \mbox{and} \ \ X_uX_v=X_vX_u.
    \end{equation}
If $u\in \mathrm{Ker}\, \psi$ then $X_u\delta=\delta X_u$ and  
\[
X_u(\delta  X_v)=X_u\delta X_v=\delta X_u X_v=\delta X_vX_u=(\delta X_v)X_u
\] 
for every $\delta\in \mathbb{E}$ and $v\in T$, that is $X_u$ lies in the centre of $D$ which is $\mathbb{F}\subseteq D_e$. Therefore, if $u\in \mathrm{Ker}\, \psi$ then $u=e$ and $\psi$ is an injective homomorphism. The subfield of $\mathbb{E}$ fixed by the image of $\psi$ lies in the centre of $D$, hence it equals $\mathbb{F}$. Since $\mathbb{E}/\mathbb{F}$ is a Galois extension the image of $\psi$ is $\mathrm{Gal}(\mathbb{E}/\mathbb{F})$, therefore $\psi$ is an isomorphism of groups. We finish the proof of Claim 1.

\smallskip

\noindent \emph{Claim 2.} $|T|=[\mathbb{E}:\mathbb{F}]=m$.

\noindent \emph{Proof of Claim 2.} By Claim 1, we obtain $n=|T|  = | \mathrm{Gal}(\mathbb{E}/\mathbb{F})|=[\mathbb{E}:\mathbb{F}]$. Therefore,
\[
m^2=[D:\mathbb{F}]=[D:\mathbb{E}][\mathbb{E}: \mathbb{F}]=n^2,
\]
and hence $n=m$. We finish the proof of Claim 2.

Denote $\alpha=X_{\textbf{t}}^m$ and note that $\alpha \in \mathbb{E}$. Since \[\psi_{\textbf{t}}(\alpha)X_{\textbf{t}}=X_{\textbf{t}}\alpha=X_{\textbf{t}}^{m+1}=\alpha X_{\textbf{t}}\] 
we conclude that $\psi_{\textbf{t}}(\alpha)=\alpha$, hence $\psi_u(\alpha)=\alpha$ for every $u\in T$. Then $\alpha$ is fixed by every element of $\mathrm{Gal}( \mathbb{E}/\mathbb{F})$, and this implies that $\alpha\in \mathbb{F}$.   Denote by $N\colon\mathbb{E^*} \to \mathbb{F^*}$ the homomorphism of groups given by
\[
N(\delta)=\prod_{\sigma \in \mathrm{Gal}( \mathbb{E}/\mathbb{F})}\sigma(\delta).\]
We say that $N(\delta)$ is the norm of $\delta$ and, by \cite[Page 838]{DF}, the map $N$ is surjective. Hence there exists $\overline{\delta}\in \mathbb{E}^*$ such that $N(\overline{\delta})=\alpha^{-1}$. We have 
\[
(\overline{\delta} X_{\textbf{t}})^m=\left(\prod_{i=0}^{m-1}\psi_{{\textbf{t}}^i}(\overline{\delta})\right)X_{\textbf{t}}^m=N(\overline{\delta})\alpha=1.
\] 
Thus we replace $X_{\textbf{t}}$ with $\overline{\delta}X_{\textbf{t}}$ if necessary and assume, without loss of generality, that $X_{\textbf{t}}^m=1$. Then $X_uX_v=X_{uv}$ for every $u$, $v\in T$ and, by (\ref{igualdade6ja}), it follows that 
\[
\left(\sum_{u\in T}\rho_u X_u\right)\left(\sum_{v\in T}\delta_v X_v\right)=\sum_{u\in T}\sum_{v\in T}\rho_u\psi_u(\delta_v)X_{uv}, \quad \rho_u, \delta_v\in \mathbb{E}.
\]
Hence the map $D \to D_{T, \mathbb{E},\psi}$ given by
\[\sum_{u\in T}\rho_uX_u\mapsto \sum_{u\in T}\rho_uY_u\] is an isomorphism of $G$-graded algebras over $\mathbb{F}$.
\end{proof}

\begin{theorem}\label{divisioniso}
Let $\varphi\colon T\to \mathrm{Gal}(\mathbb{E}/\mathbb{F})$ and $\psi\colon T\to \mathrm{Gal}(\mathbb{E}/\mathbb{F})$ be isomorphisms of groups, where $\mathbb{E}/\mathbb{F}$ is a finite field extension. Then $\varphi = \psi$ if and only if $D_{T, \mathbb{E},\varphi} \cong D_{T, \mathbb{E},\psi}$ as $G$-graded algebras over $\mathbb{F}$. 
\end{theorem}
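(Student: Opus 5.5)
The direction ``$\varphi=\psi$ implies isomorphic'' is immediate, since then the two algebras are literally the same construction; the identity map on $\sum_{s}\rho_s Y_s$ is a graded isomorphism. So the whole content is the converse, and the plan is to recover $\psi$ intrinsically from the graded algebra $D_{T,\mathbb{E},\psi}$, in a way that any graded isomorphism must respect.

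Suppose $\Phi\colon D_{T,\mathbb{E},\varphi}\to D_{T,\mathbb{E},\psi}$ is an isomorphism of $G$-graded algebras. First, $\Phi$ restricts to an $\mathbb{F}$-algebra isomorphism between the neutral components. In both algebras the neutral component is $\mathbb{E}Y_e\cong\mathbb{E}$. Hence the restriction gives an automorphism $\tau\in\mathrm{Gal}(\mathbb{E}/\mathbb{F})$ with $\Phi(\delta Y_e)=\tau(\delta)Y_e$ for all $\delta\in\mathbb{E}$. Here I write $\delta$ for $\delta Y_e$, and $Y_e$ is the identity in both algebras.

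Next, for $u\in T$, $\Phi(Y_u)$ is a nonzero homogeneous element of degree $u$ in $D_{T,\mathbb{E},\psi}$. It therefore has the form $\lambda_u Y_u$ with $\lambda_u\in\mathbb{E}^*$. Apply $\Phi$ to the relation $Y_u\delta=\varphi_u(\delta)Y_u$, which holds in $D_{T,\mathbb{E},\varphi}$. The left side becomes $\lambda_uY_u\tau(\delta)=\lambda_u\psi_u(\tau(\delta))Y_u$. The right side becomes $\tau(\varphi_u(\delta))\lambda_uY_u$. Since $\mathbb{E}$ is commutative and $\lambda_u\neq 0$, we get $\psi_u\tau=\tau\varphi_u$ for every $u\in T$.

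Finally, $\mathrm{Gal}(\mathbb{E}/\mathbb{F})$ is cyclic, hence abelian, by item (c) of the construction preceding Theorem \ref{DivisionMatrix}. So $\tau$ commutes with $\psi_u$, and therefore $\psi_u=\tau\varphi_u\tau^{-1}=\varphi_u$ for all $u$, that is, $\varphi=\psi$.

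The only delicate point is the identification of the neutral component and the fact that $\Phi$ acts on it by a field automorphism that fixes $\mathbb{F}$; this holds because $\Phi$ is $\mathbb{F}$-linear. The decisive ingredient is commutativity of the Galois group, which removes the conjugation by $\tau$.
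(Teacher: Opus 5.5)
Your proof is correct and follows essentially the same route as the paper: restrict the graded isomorphism to the neutral components to obtain $\tau\in\mathrm{Gal}(\mathbb{E}/\mathbb{F})$, write the image of the degree-$u$ basis element as $\lambda_u Y_u$, and apply the map to the relation $Y_u\delta=\varphi_u(\delta)Y_u$ to get $\psi_u\circ\tau=\tau\circ\varphi_u$. You then remove the conjugation using that $\mathrm{Gal}(\mathbb{E}/\mathbb{F})$ is abelian, exactly as the paper does; you also spell out the trivial direction, which the paper leaves implicit.
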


\begin{proof}
Suppose $D_{T, \mathbb{E},\varphi} \cong D_{T, \mathbb{E},\psi}$ and let $f\colon  D_{T, \mathbb{E},\varphi} \to D_{T, \mathbb{E},\psi}$ be an isomorphism of $G$-graded algebras over $\mathbb{F}$. Then 
\[
f((D_{T, \mathbb{E},\varphi})_e) =(D_{T, \mathbb{E},\psi})_e,
\]
and the restriction map $f\colon (D_{T, \mathbb{E},\varphi})_e \to (D_{T, \mathbb{E},\psi})_e$ is an isomorphism of $\mathbb{F}$-algebras. 
Consider the following isomorphisms of $\mathbb{F}$-algebras
\[\begin{array}{cccc}
i \colon &  \mathbb{E} & \to & (D_{T, \mathbb{E},\varphi})_e\\
& \delta & \mapsto & \delta X_{e}
\end{array} \ \ \mbox{and} \ \ \ 
\begin{array}{cccc}
j \colon &  \mathbb{E} & \to & (D_{T, \mathbb{E},\psi})_e\\
& \delta & \mapsto & \delta Y_{e}.
\end{array}
\]
Here $\{X_u \mid u\in T\}$ and $\{Y_u \mid u\in T\}$ are linear bases for  $D_{T, \mathbb{E},\varphi} $ and $D_{T, \mathbb{E},\psi}$ over $\mathbb{E}$, respectively, and $X_u$, $Y_u$ are homogeneous of degree $u$, moreover $X_e$, $Y_e$ are the units of $D_{T, \mathbb{E},\varphi}$ and $D_{T, \mathbb{E},\psi}$, respectively. 
If $g=j^{-1}\circ f\circ i$ then 
\[
g \in \mathrm{Gal}( \mathbb{E}/\mathbb{F}) \ \ \ \mbox{and} \ \ \ f(\delta X_e)=g(\delta)Y_{e}
\] 
for every $\delta \in \mathbb{E}$. In fact,
\[
f(\delta X_e)=f(i(\delta))=j(g(\delta))=g(\delta)Y_{e}.
\]

For every $u\in T$ we shall prove that $\varphi (u)=\psi (u)$, that is, $\varphi_u=\psi_u$.
If $\delta\in\mathbb{E}$ then 
\begin{equation} \label{igualdadejulho14}f(X_u (\delta X_e))=f(\varphi_u(\delta) X_u)=f(\varphi_u(\delta)X_e)f(X_u)=g(\varphi_u(\delta))f(X_u).
\end{equation}
Let $\delta_u \in \mathbb{E}$ be such that $f(X_u)=\delta_uY_u$. By $(\ref{igualdadejulho14})$, and since $f$ is an isomorphism, it follows that 
\begin{align*}
 g(\varphi_u(\delta))f(X_u) =&f(X_u (\delta X_e))=
 f(X_u)f(\delta X_{e})\\
 =& (\delta_u Y_u)(g(\delta)Y_{e})=\delta_u \psi_u(g(\delta)) Y_u= \psi_u(g(\delta))\delta_u Y_u  \\
 =& \psi_u(g(\delta)) f(X_u),
\end{align*}
and so $g(\varphi_u(\delta))=\psi_u(g(\delta))$ for every $\delta\in\mathbb{E}$. This implies $g\circ \varphi_u=\psi_u\circ g$, hence $\varphi_u= g^{-1}\circ \psi_u\circ g$. Since $\mathrm{Gal}(\mathbb{E}/\mathbb{F})$ is an abelian group, we obtain $\varphi_u=\psi_u$, as desired. \end{proof}

 \begin{notation}
Henceforth we fix an extension $\mathbb{E}_m$ of the field $\mathbb{F}$ such that $[\mathbb{E}_m:\mathbb{F}]=m$ and a generator $\emph{\textbf{t}}$ of the cyclic subgroup $T$ of $G$.
 \end{notation}

\noindent {\bf Comments.} Let $a\in \mathbb{Z}$ be such that $|\mathbb{F}|=p^a$ where  $\mathrm{char}\, \mathbb{F}=p$.
Since $[\mathbb{E}_m:\mathbb{F}]=m$, we have 
$|\mathbb{E}_m|=p^{ma}$.  Let $\zeta\colon\mathbb{E}_m \to \mathbb{E}_m$ be the Frobenius map given by
\[
\zeta(x)=x^p, \  x\in \mathbb{E}_m.
\] 
Since $\zeta$ is a generator of the cyclic group $\mbox{Gal}(\mathbb{E}_m / \mathbb{Z}_p)$, $|\mbox{Gal}(\mathbb{E}_m / \mathbb{Z}_p)|=[\mathbb{E}_m:\mathbb{Z}_p]=ma$, $|\mbox{Gal}(\mathbb{E}_m / \mathbb{F})|=m$ and $\mbox{Gal}(\mathbb{E}_m / \mathbb{F})$ is a subgroup of $\mbox{Gal}(\mathbb{E}_m / \mathbb{Z}_p)$, it follows that
$\mbox{Gal}(\mathbb{E}_m / \mathbb{F})$ is generated by $\zeta^a$. Since every generator of $\mbox{Gal}(\mathbb{E}_m / \mathbb{F})$ has the form $(\zeta^a)^b=\zeta^{ab}$ such that gcd$(m,b)=1$   
and $1 \leq b \leq m-1$, there exists an isomorphism of groups
\begin{equation}\label{11dejaneiro}
\psi^{(m,b)}\colon T\to \mathrm{Gal}(\mathbb{E}_m/\mathbb{F}) \ \ \mbox{such that} \ \ \psi^{(m,b)}(\textbf{t})=\zeta^{ab}.
\end{equation}
Clearly $\psi^{(m,b)}$ depends on the choice of the generator $\textbf{t}$ of $T$. Note that there exist $\phi(m)$ such isomorphisms where $\phi$ is the Euler's totient function. Moreover, if $\psi\colon T\to \mathrm{Gal}(\mathbb{E}_m/\mathbb{F})$ is an isomorphism of groups, then $\psi=\psi^{(m,b)}$ for some $b$.

\smallskip

We shall use the above notation $\psi^{(m,b)}$ in the next result.

\begin{theorem} \label{teorema2de16denovembro}
     If $\mathbb{K}/\mathbb{F}$ is a field extension, $[\mathbb{K}:\mathbb{F}]=|T|=m$ and
      $\varphi\colon T\to \mathrm{Gal} ( \mathbb{K}/\mathbb{F})$ is an isomorphism of groups, then there exists an integer $b$ such that  
     \[\emph{gcd}(m,b)=1, \ 1 \leq b \leq m-1 \ \  and \ \ D_{T, \mathbb{K},\varphi}\cong 
     D_{T, \mathbb{E}_m,\psi^{(m,b)}}\] as $G$-graded algebras over $\mathbb{F}$. Moreover, there are exactly $\phi(m)$ division $G$-gradings on $M_m(\mathbb{F})$ with support $T$, that are not pairwise isomorphic.
\end{theorem}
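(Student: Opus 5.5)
The plan has two parts: first transport $D_{T,\mathbb{K},\varphi}$ along a field isomorphism $\mathbb{K}\cong\mathbb{E}_m$, then count gradings using Theorems \ref{DivisionMatrix} and \ref{divisioniso}.

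\emph{Transport to $\mathbb{E}_m$.} Since $\mathbb{K}$ and $\mathbb{E}_m$ are both extensions of degree $m$ of the finite field $\mathbb{F}$, they are splitting fields of $x^{q^m}-x$ over $\mathbb{F}$. As in the proof of Proposition \ref{ordinarymain}, this gives an $\mathbb{F}$-isomorphism $\tau\colon\mathbb{K}\to\mathbb{E}_m$. Conjugation by $\tau$ gives a group isomorphism $c\colon\mathrm{Gal}(\mathbb{K}/\mathbb{F})\to\mathrm{Gal}(\mathbb{E}_m/\mathbb{F})$, $\sigma\mapsto\tau\sigma\tau^{-1}$. Then $\psi:=c\circ\varphi\colon T\to\mathrm{Gal}(\mathbb{E}_m/\mathbb{F})$ is an isomorphism, so by the Comments $\psi=\psi^{(m,b)}$ for some $b$ with $\gcd(m,b)=1$ and $1\le b\le m-1$. (When $m=1$ the range of $b$ should be read as $b=1$; this degenerate case is trivial.)

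Define
\[
f\colon D_{T,\mathbb{K},\varphi}\to D_{T,\mathbb{E}_m,\psi},\qquad \sum_{s}\rho_sX_s\mapsto\sum_s\tau(\rho_s)Y_s .
\]
This map is $\mathbb{F}$-linear, bijective and preserves degrees. For multiplicativity it suffices to compare the two sides on products of the form $(\rho X_s)(\delta X_t)=\rho\varphi_s(\delta)X_{st}$. The image of this product is $\tau(\rho)\tau(\varphi_s(\delta))Y_{st}$. The product of the images is $\tau(\rho)\psi_s(\tau(\delta))Y_{st}$. These agree because $\psi_s\tau=\tau\varphi_s$ by the definition of $c$. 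This step is routine and I expect no obstacle.

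\emph{Counting the gradings.} Let $D=M_m(\mathbb{F})$ carry a division $G$-grading with support $T$. By Theorem \ref{DivisionMatrix}, $D\cong D_{T,\mathbb{E},\psi'}$ for $\mathbb{E}=D_e$, a field of degree $m$ over $\mathbb{F}$. By the first part, $D\cong D_{T,\mathbb{E}_m,\psi^{(m,b)}}$ for some admissible $b$. Conversely, each $D_{T,\mathbb{E}_m,\psi^{(m,b)}}$ is realized on $M_m(\mathbb{F})$ by construction (g). So every such grading is isomorphic to one of at most $\phi(m)$ models.

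For distinctness, different values of $b$ give different maps $\psi^{(m,b)}$, since their values $\zeta^{ab}$ on $\textbf{t}$ are distinct elements of the cyclic group of order $m$ generated by $\zeta^a$. By Theorem \ref{divisioniso}, the corresponding graded algebras are therefore pairwise non-isomorphic. Hence there are exactly $\phi(m)$ isomorphism classes.

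The only subtle point is interpreting the statement "there are exactly $\phi(m)$ gradings" as a count of isomorphism classes of graded algebras, with the models transported to $M_m(\mathbb{F})$ via construction (g). Once that interpretation is fixed, the argument above is complete.
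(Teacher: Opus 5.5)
Your proposal is correct and follows essentially the same route as the paper: you transport $D_{T,\mathbb{K},\varphi}$ along an $\mathbb{F}$-isomorphism $\mathbb{K}\to\mathbb{E}_m$ using the conjugated Galois map, identify the result with some $\psi^{(m,b)}$ via the Comments, and count using Theorems \ref{DivisionMatrix} and \ref{divisioniso}. Your additional remarks only make explicit what the paper leaves implicit: each model is realized on $M_m(\mathbb{F})$ via construction (g), the maps $\psi^{(m,b)}$ are pairwise distinct, and in the degenerate case $m=1$ the stated range $1\le b\le m-1$ is empty.
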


\begin{proof}
Since $\mathbb{E}_m$ and $\mathbb{K}$ are splitting fields of the polynomial
\[
w(x)=x^{q^m}-x
\]
over $\mathbb{F}$, where $|\mathbb{E}_m|=|\mathbb{K}|=q^m$, by \cite[Theorem 3.131]{R} there exists an isomorphism of fields $\eta\colon \mathbb{K}\to  \mathbb{E}_m$ that fixes $\mathbb{F}$ pointwise. 
The map $\overline{\eta}\colon \mathrm{Gal} ( \mathbb{K}/\mathbb{F}) \to \mathrm{Gal} ( \mathbb{E}_m/\mathbb{F})$ given by
\[\overline{\eta}(f)=\eta\circ f \circ \eta^{-1}\]
is an isomorphism of groups and 
 $\overline{\eta}\circ \varphi\colon T\to \mathrm{Gal} ( \mathbb{E}_m/\mathbb{F})$ is one as well.

\

\noindent \emph{Claim 1.} $D_{T, \mathbb{K},\varphi}\cong 
     D_{T, \mathbb{E}_m,\overline{\eta}\circ \varphi}$ as $G$-graded algebras over $\mathbb{F}$.

\noindent \emph{Proof of Claim 1.} Let $\{X_s \ | \ s\in T\}$ and $\{Y_s \ | \ s\in T\}$ be bases for the vector spaces 
$D_{T, \mathbb{K},\varphi}$ and $ 
     D_{T, \mathbb{E}_m,\overline{\eta}\circ \varphi}$ over 
$\mathbb{K}$ and $\mathbb{E}_m$, respectively. Here $X_s$ and $Y_s$ are homogeneous of degree $s$. 
Define $\theta\colon D_{T, \mathbb{K},\varphi}\to 
     D_{T, \mathbb{E}_m,\overline{\eta}\circ \varphi}$ by 
\[
\theta \left(\sum_{s\in T}k_s X_s\right)=
\sum_{s\in T}\eta(k_s) Y_s, \ \ k_s \in \mathbb{K}.
\]
The map $\theta$ is a linear isomorphism of $G$-graded vector spaces over $\mathbb{F}$. For the proof that $\theta$ is an isomorphism of algebras over $\mathbb{F}$, it is enough to prove that
\[
\theta((k_sX_s)(k_uX_u))=\theta(k_sX_s)\theta(k_uX_u),
\]
for every $s$, $u\in T$ and $k_s$, $k_u\in \mathbb{K}$.
We have
\begin{align*}
\theta(k_sX_s)\theta(k_uX_u)=& (\eta(k_s)Y_s)(\eta(k_u)Y_u)=
\eta(k_s)(\overline{\eta}\circ \varphi)_s(\eta(k_u))Y_{su}\\
=&
\eta(k_s)\overline{\eta}( \varphi_s)(\eta(k_u))Y_{su} \\
=&
\eta(k_s)( \eta \circ \varphi_s \circ \eta^{-1})(\eta(k_u))Y_{su}
=\eta(k_s) \eta ( \varphi_s (k_u))Y_{su} \\
=&\eta(k_s \varphi_s (k_u))Y_{su}=\theta (  
k_s \varphi_s (k_u)X_{su} ) \\
=&\theta((k_sX_s)(k_uX_u))
\end{align*}
as desired.

\

\noindent \emph{Claim 2.} There exists an integer $b$ such that 
     \[\mbox{gcd}(m,b)=1, \ 1 \leq b \leq m-1 \ \  \mbox{and} \ \ D_{T, \mathbb{K},\varphi}\cong 
     D_{T, \mathbb{E}_m,\psi^{(m,b)}}\] as $G$-graded algebras over $\mathbb{F}$.

\noindent \emph{Proof of Claim 2.} By the comments before this theorem, there exists an integer $b$ such that $D_{T, \mathbb{E}_m,\overline{\eta} \circ \varphi}= D_{T, \mathbb{E}_m,\psi^{(m,b)}}$ as $G$-graded algebras over $\mathbb{F}$. Now we use Claim 1.

\

By Theorem \ref{DivisionMatrix}, Claim 2, and Theorem \ref{divisioniso} there are exactly $\phi(m)$ division $G$-gradings on $M_m(\mathbb{F})$ with support $T$ which are not pairwise isomorphic.
 \end{proof}

In what follows, we make some comments concerning \cite{BDW}. In it, the authors study cyclic algebras over an arbitrary field $F$. Since the Galois groups considered here are abelian, it follows that Theorems \ref{divisioniso} and \ref{teorema2de16denovembro} are particular cases of \cite[Proposition 2.1, Corollary 2.2, Corollary 2.3]{BDW}. These results were proved independently by the authors of both papers, and we have decided to include our proofs for the sake of completeness and because of the ideas that will be used in the subsequent sections.

Another point worth mentioning concerns Theorem \ref{DivisionMatrix} and \cite[Theorem 5.1]{BDW}. Although these are distinct results, they overlap when $m$ is a prime number. The paper \cite{BDW} deals with the case $m$ prime. We remark that finite graded division rings, graded by an abelian group, were classified up to isomorphism in \cite[Theorem 5.14]{EK0}.

\section{One example of division grading on matrix algebras}\label{example}

In this section we use some ideas from \cite{KSD} in order to construct a divison $G$-grading on $M_2(\mathbb{F})$, and to make the next section more transparent and easier to understand.

Let $\mathbb{F}$ be the field with $2$ elements, $\mathbb{F}=\{0,1\}$, and let $G$ be a group with some subgroup $T=\{e,a\}$ of order $2$. Consider $\mathbb{E}$, the splitting field of the polynomial $f(x)=x^4-x$ 
over $\mathbb{F}$, then 
\[
\mathbb{E}=\{0,1, \alpha, \alpha^2\},
\] 
where $\alpha^2+\alpha=1$. Note that  the field $\mathbb{E}$ is a finite extension of $\mathbb{F}$ and $[\mathbb{E}:\mathbb{F}]=2$. Moreover, $\mathbb{E}$ is a Galois extension of $\mathbb{F}$ and $|\mbox{Gal} (\mathbb{E}/\mathbb{F})|=[\mathbb{E}:\mathbb{F}]=2$. Hence, $\mbox{Gal} (\mathbb{E}/\mathbb{F})$ is isomorphic to the cyclic group $T$. Observe that 
\[
\mbox{Gal} (\mathbb{E}/\mathbb{F})=\{\psi_e, \psi_a\},
\] 
where $\psi_e\colon \mathbb{E}\to \mathbb{E}$ is the identity map and $\psi_a\colon  \mathbb{E}\to \mathbb{E}$ is given by 
\[
\psi_a (0)=0, \  \psi_a(1)=1, \  \psi_a(\alpha)=\alpha^2 ,\  \mbox{and} \ \psi_a(\alpha^2)=\alpha.
\] 
Let $\psi\colon T\to \mbox{Gal} (\mathbb{E}/\mathbb{F})$ be the map defined by $\psi(e)=\psi_e$ and $\psi(a)=\psi_a$. It is clear that $\psi$ is a group isomorphism.

Consider $D_{T, \mathbb{E},\psi}$ the vector space over $\mathbb{E}$ with basis $\{Y_e, Y_a\}$. Its elements have the form
\[
\rho_e Y_e + \rho_a Y_a,  \quad  \rho_e,\rho_a\in \mathbb{E}.
\]
For the 2-cocycle $k$ of  $\mbox{Gal} (\mathbb{E}/\mathbb{F})$ in $\mathbb{E^*}$ given by $k_{s,t}=1$ for every $s$, $t \in \mbox{Gal} (\mathbb{E}/\mathbb{F})$, there exists an $\mathbb{E}$-linear isomorphism $D_{T, \mathbb{E},\psi} \to A=(\mathbb{E}, \mbox{Gal} (\mathbb{E}/\mathbb{F}), k)$ such that $Y_s \mapsto y_{\psi_s}$. Thus $A$ induces on $D_{T, \mathbb{E},\psi}$ a structure of an algebra over $\mathbb{F}$ with multiplication given by the following rule:
\[
\left(\rho_e Y_e + \rho_a Y_a\right)\left(\delta_e Y_e + \delta_a Y_a\right)=\left(\rho_e\delta_e+\rho_a\psi_a(\delta_a)\right) Y_e + \left(\rho_e \delta_a + \rho_a\psi_a(\delta_e)\right)Y_a.
\]
We have the following $G$-grading on $D_{T, \mathbb{E},\psi}$ whose support equals $T$: 
\begin{align*}
  (D_{T, \mathbb{E},\psi})_e=& \{0,Y_e,\alpha Y_e, \alpha^2 Y_e\},\\
(D_{T, \mathbb{E},\psi})_a=&\{0, Y_a,\alpha Y_a, \alpha^2 Y_a\},\\
(D_{T, \mathbb{E},\psi})_g=&\{0\} \ \mbox{for all} \ g\notin T,
\end{align*}
and the next table describes the products of the nonzero homogeneous elements in $D_{T, \mathbb{E},\psi}$:
\begin{center}
\begin{tabular}{|r||c|c|c|c|c|c|}
\hline
& $Y_e$ & $\alpha Y_e$  & $\alpha^2 Y_e$ &$Y_a$ & $\alpha Y_a$& $\alpha^2 Y_a$\\
\hline\hline
$Y_e$ &  $Y_e$ & $\alpha Y_e$  & $\alpha^2 Y_e$ &$Y_a$ & $\alpha Y_a$& $\alpha^2 Y_a$ \\
\hline
$\alpha Y_e$ & $\alpha Y_e$ & $\alpha^2 Y_e$ & $Y_e$ & $\alpha Y_a$ & $\alpha^2 Y_a$ & $Y_a$\\
\hline  
$\alpha^2 Y_e$ &$\alpha^2 Y_e$ & $Y_e$ & $\alpha Y_e$ & $\alpha^2 Y_a$ & $Y_a$& $\alpha Y_a$ \\
\hline
$Y_a$ &$Y_a$ & $\alpha^2 Y_a$ & $\alpha Y_a$ & $Y_e$ & $\alpha^2 Y_e$ & $\alpha Y_e$ \\
\hline
$\alpha Y_a$ &$\alpha Y_a$ & $Y_a$ & $\alpha^2 Y_a$ & $\alpha Y_e$ &$Y_e$  & $\alpha^2 Y_e$ \\
\hline
$\alpha^2 Y_a$ &$\alpha^2 Y_a$ & $\alpha Y_a$ & $Y_a$ & $\alpha^2 Y_e$ & $\alpha Y_e$ & $ Y_e$ \\
\hline
\end{tabular}
\end{center}
Hence, according to the table above,  $D_{T, \mathbb{E},\psi}$ is a 
$G$-graded division algebra with the previous $G$-grading.

Let $f\colon D_{T,\mathbb{E},\psi}\to M_2(\mathbb{F})$ be the $\mathbb{F}$-linear map such that
\[
f(Y_e+ \alpha Y_a)=e_{1,2},\,  f(Y_e+\alpha^2 Y_a)=e_{2,1},\, f(\alpha Y_e+Y_a)=e_{1,1},\,  f(\alpha^2 Y_e+ Y_a)=e_{2,2}.
\]
We have that $f$ is an isomorphism of algebras over $\mathbb{F}$, and $f$ induces the following division $G$-grading on $M_2(\mathbb{F})$ with support $T$:
\begin{align*}
    M_2(\mathbb{F})_e=&\left\lbrace
\begin{pmatrix}
0 & 0 \\
0 & 0
\end{pmatrix},
\begin{pmatrix}
1 & 0 \\
0 & 1
\end{pmatrix},
\begin{pmatrix}
1 & 1 \\
1 & 0
\end{pmatrix},\begin{pmatrix} 0 & 1 \\
1 & 1
\end{pmatrix}\right\rbrace ,\\
M_2(\mathbb{F})_a=& \left\lbrace
\begin{pmatrix}
0 & 0 \\
0 & 0
\end{pmatrix},
\begin{pmatrix}
0 & 1 \\
1 & 0
\end{pmatrix},
\begin{pmatrix} 1 & 1 \\
0 & 1
\end{pmatrix}, \begin{pmatrix} 1 & 0 \\
1 & 1
\end{pmatrix}\right\rbrace ,\\
M_2(\mathbb{F})_g=&\left\lbrace \begin{pmatrix}
0 & 0 \\
0 & 0
\end{pmatrix}\right\rbrace \ \mbox{for every} \ g\notin T.
\end{align*}
As a consequence, $M_2(\mathbb{F})$, endowed with this grading, is isomorphic to $D_{T,\mathbb{E},\psi}$, as $G$-graded algebras over $\mathbb{F}$.

\section{Gradings on Finite Matrix Algebras}\label{gradingsmatrix}

 In this section, we recall that $\mathbb{F}$ is a finite field with $q$ elements, $G$ is a group such that every finite subgroup of $G$ is cyclic, and $n\geq 1$ is a positive integer. Our goal here is to classify the $G$-gradings on $M_n(\mathbb{F})$.

The classification of the $G$-gradings on finite matrix algebras is a 
consequence of the classification of the division $G$-gradings on these algebras obtained in Section \ref{divisiongradings} in a standard way, see \cite{BZ} and \cite[Section 2.3]{EK}. In order to proceed we develop the notion of induced grading on the tensor products introduced in \cite{BZ}.

Let $D=\oplus_{g\in G}D_g$ be a $G$-graded algebra 
and let $R=M_t(\mathbb{K})\otimes_{\mathbb{K}}D$, here the field $\mathbb{K}$ is arbitrary, not necessarily finite. Given ${\bf g}=(g_1,\dots, g_t)\in G^t$ let $R_g=\mathrm{Span}\{e_{i,j}\otimes d \mid d\in D_h, g_ihg_j^{-1}=g\}$, where $e_{i,j}$ are the usual matrix units. Then $R=\oplus_{g\in G}R_g$ and this decomposition is a $G$-grading on $R$. We refer to this grading as the grading on the tensor product induced by ${\bf g}$. Note that in the induced grading we have 
\begin{equation} \label{equacao2de5deoutubro}
\deg(e_{i,j}\otimes d)= g_i (\deg d) g_j^{-1},
\end{equation}
where $d$ is a homogeneous element of $D$. 

A classification of the graded simple algebras is given in \cite[Theorem 2.6]{EK}, as a direct consequence of this result and the comments following it we obtain the following theorem.
 
\begin{theorem} \label{teorema28deagosto}
Let $\mathbb{K}$ be a field, let $\tilde{G}$ be a group and let $R$ be a finite dimensional $\tilde{G}$-graded algebra over $\mathbb{K}$. If $R$ is a graded simple algebra then there exist a $\tilde{G}$-graded division algebra $D$ over $\mathbb{K}$, a positive integer $t$ and a tuple ${\bf g}=(g_1,\dots, g_t)\in \tilde{G}^t$ such that $R$ is isomorphic as a $\tilde{G}$-graded algebra to $M_t(\mathbb{K})\otimes_{\mathbb{K}} D$ with the grading induced by the tuple ${\bf g}$.
\end{theorem}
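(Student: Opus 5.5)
The plan is to follow the standard graded Schur's lemma argument, as in \cite{BZ} and \cite[Section 2.3]{EK}. First, since $R$ is finite dimensional and graded simple, we choose a minimal graded left ideal $I$ of $R$. Such an ideal exists by a dimension argument, and $RI \neq 0$, since otherwise the left annihilator of $R$ would be a nonzero graded ideal. Then $I$ is a graded simple left $R$-module. Set $D=\mathrm{End}_R(I)^{op}$ and consider the graded endomorphisms: the span of the maps $I\to I$ of each fixed degree $g$ (those sending $I_h$ into $I_{hg}$, written on the right). Because $I$ is finite dimensional, every $R$-endomorphism decomposes as a sum of homogeneous ones, so $D=\oplus_g D_g$ is a $\tilde{G}$-graded algebra. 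By the graded version of Schur's lemma, every nonzero homogeneous endomorphism is injective and surjective, because its kernel and image are graded submodules of the simple graded module $I$. Hence $D$ is a graded division algebra.

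Next, regard $I$ as a graded right $D$-module. A graded module over a graded division algebra is graded-free. To see this, pick homogeneous elements $v_1,\dots,v_t$ of maximal length that are $D$-independent; independence can be tested componentwise because nonzero homogeneous elements of $D$ are invertible. This gives a homogeneous $D$-basis $v_1,\dots,v_t$ of $I$, with $\deg v_i=g_i$. I then plan to show that the action map $R\to \mathrm{End}_D(I)$ is an isomorphism. Injectivity holds because its kernel is a graded ideal, and it is proper since $RI\neq0$. Surjectivity comes from the graded Jacobson density theorem: $I$ is graded simple and finite dimensional over $D$, and here $D$ equals the full centralizer of the $R$-action on $I$. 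This density step is the main obstacle. I would handle it by replicating the classical density proof with homogeneous vectors, or alternatively by a dimension count using the graded Wedderburn argument: $R$ is a direct sum of copies of shifts of $I$ as a graded module.

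Finally, identify $\mathrm{End}_D(I)$ with $M_t(\mathbb{K})\otimes_{\mathbb{K}} D$, and compute degrees using the chosen homogeneous basis. The endomorphism sending $v_j d\mapsto v_i d'$, written as $e_{i,j}\otimes d$ with $d$ homogeneous, maps the degree-$g_j$ element $v_j$ to $v_i d$, which has degree $g_i\deg d$. So its degree is $g_i(\deg d)g_j^{-1}$, which is exactly formula (\ref{equacao2de5deoutubro}). Therefore the grading on $R$ transported through this isomorphism is the grading induced by ${\bf g}=(g_1,\dots,g_t)$, which proves the theorem. Any apparent left/right or inverse conventions are absorbed by replacing the $g_i$ by their inverses if needed.
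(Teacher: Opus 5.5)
Your proposal is correct. It is the standard argument behind \cite[Theorem 2.6]{EK} and the remarks following it: graded Schur lemma, then graded density (or your dimension count), then writing $\mathrm{End}_D(I)$ in a homogeneous $D$-basis of degrees $g_1,\dots,g_t$. That cited result is exactly what the paper invokes in place of a proof of its own, and with your right-action convention the degree $g_i(\deg d)g_j^{-1}$ already comes out in the required form, so no inversion of the $g_i$ is needed.
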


The above theorem reduces the classification of the $\tilde{G}$-gradings on matrix algebras to the classification of the division $\tilde{G}$-gradings on these algebras. We draw the readers' attention to the fact that in the theorem above $\tilde{G}$ is an arbitrary group and $\mathbb{K}$ is an arbitrary field. Hence as a consequence of Theorem \ref{DivisionMatrix} we obtain the following theorem.

\begin{theorem}\label{teorema16denovembro}
Let $\mathbb{F}$ be a finite field and let $G$ be a group such that every finite subgroup of $G$ is cyclic.
If $M_n(\mathbb{F})$ is a $G$-graded algebra, then there exist an integer  $t\geq 1$, a finite subgroup $T$ of $G$, a finite field extension $\mathbb{E}/\mathbb{F}$ of degree 
$|T|$ and 
an isomorphism $\psi\colon T\to \mathrm{Gal} ( \mathbb{E}/\mathbb{F})$ of groups such that $M_n(\mathbb{F})$ is isomorphic to $M_t(\mathbb{F})\otimes_{\mathbb{F}} D_{T,\mathbb{E},\psi}$ as $G$-graded algebras. 
\end{theorem}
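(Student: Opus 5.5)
We apply Theorem \ref{teorema28deagosto} with $\mathbb{K}=\mathbb{F}$ and $\tilde G=G$. Since $M_n(\mathbb{F})$ is simple, it is graded simple for any grading. Hence there are a $G$-graded division algebra $D$ over $\mathbb{F}$, an integer $t\geq 1$ and a tuple ${\bf g}\in G^t$ such that $M_n(\mathbb{F})\cong M_t(\mathbb{F})\otimes_{\mathbb{F}} D$ as $G$-graded algebras, with the grading induced by ${\bf g}$. The rest of the argument identifies $D$ as one of the algebras $D_{T,\mathbb{E},\psi}$.

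First, $D$ is simple as an ungraded algebra. A proper nonzero ideal $I$ of $D$ would give a proper nonzero ideal $M_t(\mathbb{F})\otimes I$ of $M_t(\mathbb{F})\otimes D\cong M_n(\mathbb{F})$, which is impossible. Since $D$ is finite, Wedderburn's theorem and Wedderburn's little theorem (exactly as in the proof of Proposition \ref{ordinarymain}) give $D\cong M_s(\mathbb{L})$ for some finite field $\mathbb{L}\supseteq\mathbb{F}$. The centre of $M_t(\mathbb{F})\otimes D$ is $\mathbb{F}\otimes Z(D)\cong\mathbb{L}$. This must equal $Z(M_n(\mathbb{F}))=\mathbb{F}$, so $\mathbb{L}=\mathbb{F}$ and $D\cong M_s(\mathbb{F})$ with $n=ts$.

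Next, let $T=\supp(D)$. In a graded division algebra the support is a subgroup: for nonzero homogeneous $x\in D_g$ and $y\in D_h$, the product $xy$ is invertible, hence nonzero, so $gh\in T$, and $x^{-1}$ is homogeneous of degree $g^{-1}$. The group $T$ is finite because $D$ is finite-dimensional, so by the hypothesis on $G$ it is cyclic. Thus $D=M_s(\mathbb{F})$ is a division $G$-graded algebra with cyclic support $T$. Theorem \ref{DivisionMatrix} then gives $\mathbb{E}=D_e$ with $[\mathbb{E}:\mathbb{F}]=|T|=s$ and an isomorphism $\psi\colon T\to\mathrm{Gal}(\mathbb{E}/\mathbb{F})$ with $D\cong D_{T,\mathbb{E},\psi}$ as $G$-graded algebras.

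Finally, a graded isomorphism $D\to D_{T,\mathbb{E},\psi}$ tensored with the identity of $M_t(\mathbb{F})$ respects the induced grading, since degrees are computed by (\ref{equacao2de5deoutubro}) from $g_i$, $g_j$ and the degree of the homogeneous factor. This yields the desired graded isomorphism $M_n(\mathbb{F})\cong M_t(\mathbb{F})\otimes_{\mathbb{F}} D_{T,\mathbb{E},\psi}$ with the grading induced by ${\bf g}$. The main point requiring care is the second paragraph: Theorem \ref{DivisionMatrix} applies only to a division grading on a full matrix algebra over $\mathbb{F}$ itself, so one must rule out that $D$ is a matrix algebra over a proper extension of $\mathbb{F}$, which is what the centre computation does.
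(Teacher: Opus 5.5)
Your proposal is correct and follows the paper's proof: graded-simple structure theorem, simplicity of $D$, Wedderburn plus a comparison of centres to get $D\cong M_s(\mathbb{F})$, then Theorem \ref{DivisionMatrix}. You also make explicit two steps the paper leaves implicit. First, that $\mathrm{Supp}\, D$ is a finite, hence cyclic, subgroup, which is exactly where the hypothesis on $G$ is used. Second, that a graded isomorphism $D\cong D_{T,\mathbb{E},\psi}$ extends to the induced grading on $M_t(\mathbb{F})\otimes_{\mathbb{F}} D$.
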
 

\begin{proof}
We recall that $M_n(\mathbb{F})$ is simple as an $\mathbb{F}$-algebra,  hence $M_n(\mathbb{F})$ is a $G$-graded simple algebra for any $G$-grading it might be equipped with. Thus by Theorem \ref{teorema28deagosto}, there exist $t\geq 1$, ${\bf g}=(g_1,\dots, g_t)\in G^t$, and a $G$-graded division algebra $D$ such that $M_n(\mathbb{F})\cong M_t(\mathbb{F})\otimes_{\mathbb{F}} D$ as $G$-graded algebras, where the latter algebra is equipped with the grading induced by ${\bf g}$. Observe that $D$ is a simple algebra because $M_n(\mathbb{F})$ is simple. 
    
By Wedderburn's theorem, there exist $m\geq 1$ and a division algebra $\mathbb{L}$ over $\mathbb{F}$ such that $D \cong M_m(\mathbb{L})$ as algebras over $\mathbb{F}$, see \cite[Theorem 2.61]{B}. Since $M_n(\mathbb{F})$ is finite, we have that $\mathbb{L}$ is also finite. Since $\mathbb{L}$ is a finite division ring, Wedderburn's little theorem implies that $\mathbb{L}$ is a field, see \cite[Theorem 8.23]{R}. Therefore
\[
M_n(\mathbb{F})\cong M_t(\mathbb{F})\otimes_{\mathbb{F}} M_m(\mathbb{L})\cong M_{tm}(\mathbb{L}),
\]
as algebras over the field $\mathbb{F}$.
As for the centres, one has the isomorphism 
\[
\mathbb{F} = Z(M_n(\mathbb{F}))\cong Z(M_{tm}(\mathbb{L})) = \mathbb{L},
\] 
and so $D\cong M_m(\mathbb{F})$. Now we apply Theorem \ref{DivisionMatrix}, this finishes the proof.
\end{proof}

In order to proceed we need to establish when two of the tensor products in Theorem \ref{teorema28deagosto} are isomorphic. Of course some
different tuples ${\bf g}\in \tilde{G}^t$ will determine different gradings on $M_t(\mathbb{K})\otimes_{\mathbb{K}} D$ while others might produce isomorphic gradings. 

It follows from the proof of \cite[Lemma 1.3]{AH} that if we multiply the entries of ${\bf g}$ by elements of $\mathrm{Supp}\, D$ and then permute the entries of ${\bf g}$ the resulting grading is isomorphic to the original one. Hence we may assume in Theorem \ref{teorema28deagosto} that 
\begin{equation}\label{tupleg}
{\bf g}=(\underbrace{g_1,\ldots ,g_1}_{k_1},\underbrace{g_2,\ldots ,g_2}_{k_2},\ldots ,\underbrace{g_s,\ldots ,g_s}_{k_s}),
\end{equation}
where
\begin{equation} \label{equacao5deoutubro}
    g_i^{-1}g_j\notin \mathrm{Supp}\, D \quad \mbox{for all} \quad  i\neq j.
\end{equation}

We remark that $t=k_1+\cdots +k_s$. The following notation will be useful from now on.

 \begin{notation}\label{notation22set}
We denote by $\mathcal{M}(D,\kappa,\gamma)$, where $\kappa=(k_1,\dots, k_s)$ and $\gamma=(g_1,\dots, g_s)$, the grading on $M_t(\mathbb{K})\otimes_{\mathbb{K}} D$ induced by the tuple (\ref{tupleg}).
 \end{notation}    

 The remarks above imply that in Theorem \ref{teorema28deagosto} we may replace $M_t(\mathbb{K})\otimes_{\mathbb{K}} D$ with $\mathcal{M}(D,\kappa,\gamma)$ for a suitable triple $(D,\kappa,\gamma)$. Hence as a direct consequence of \cite[Definition 2.11 and Corollary 2.12]{EK} we obtain the following theorem.

\begin{theorem} \label{teorema5deoutubro}
Let $\mathbb{K}$ be a field, let $\tilde{G}$ be an abelian group and let $R$ be a finite dimensional $\tilde{G}$-graded algebra 
over $\mathbb{K}$. If $R$ is graded simple then $R$ is isomorphic to some $\mathcal{M}(D, \kappa,\gamma)$ as $\tilde{G}$-graded algebras.

Two $\tilde{G}$-graded algebras $\mathcal{M}(D, \kappa,\gamma)$ and $\mathcal{M}(D', \kappa^{\prime},\gamma^{\prime})$ are isomorphic if and only if all of the following items are satisfied:
\begin{itemize}
\item[(a)] $D\cong D'$ as $\tilde{G}$-graded algebras, 
\item[(b)] $\kappa$ and $\kappa^{\prime}$ have the same number of components $s$,
\item[(c)] there exist an element $g\in \tilde{G}$, a permutation $\pi\in Sym(s)$, and $h_1$, \dots, $h_s\in\mathrm{Supp}\, D$ such that $k_i^{\prime}= k_{\pi(i)}$ and $g_i^{\prime}=g_{\pi(i)}h_ig$ for every $i$.
\end{itemize} 
\end{theorem}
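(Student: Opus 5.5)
The first part of the statement follows from Theorem \ref{teorema28deagosto} together with the normalization of the tuple described before Notation \ref{notation22set}. Given a graded simple $R$, Theorem \ref{teorema28deagosto} yields $R\cong M_t(\mathbb{K})\otimes_{\mathbb{K}} D$ with the grading induced by some ${\bf g}\in\tilde G^t$. By the argument of \cite[Lemma 1.3]{AH}, we may multiply entries by elements of $\mathrm{Supp}\,D$ and permute them. We then group together entries lying in the same coset $g\,\mathrm{Supp}\,D$; since $\tilde G$ is abelian, $\mathrm{Supp}\,D$ is a subgroup and left and right cosets coincide. This brings ${\bf g}$ to the form (\ref{tupleg}) with (\ref{equacao5deoutubro}), so $R\cong\mathcal{M}(D,\kappa,\gamma)$.

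For the isomorphism criterion, the ``if'' direction is a direct construction. Each of the following operations yields an isomorphic grading:
\begin{itemize}
\item permuting the blocks, via conjugation by a permutation matrix tensored with $1$;
\item replacing $g_i$ by $g_ih_i$ with $h_i\in\mathrm{Supp}\,D$, via conjugation by $\mathrm{diag}(1,\dots,1\otimes d_i,\dots)$ with $d_i\in D_{h_i}$ invertible (it is invertible because $D$ is a division grading);
\item multiplying all $g_i$ by a common $g$, which leaves $g_i(\deg d)g_j^{-1}$ unchanged since $\tilde G$ is abelian;
\item replacing $D$ by an isomorphic graded algebra $D'$, via $\mathrm{id}\otimes\phi$.
\end{itemize}

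For the ``only if'' direction I would recover the invariants intrinsically. The identity component is
\[
R_e\cong\bigoplus_{i=1}^s M_{k_i}(D_e),
\]
since $e_{ij}\otimes d$ has degree $e$ only when $g_i^{-1}g_j\in\mathrm{Supp}\,D$, which forces $i=j$ by (\ref{equacao5deoutubro}). The number of simple components of $R_e$ is therefore $s$, and their sizes give $\kappa$ up to a permutation $\pi$. Let $\epsilon_i$ be the block idempotents; an isomorphism maps them, up to conjugation inside $R_e$, to the corresponding idempotents on the other side. Taking a primitive idempotent $\epsilon$ of $R_e$ inside block $i$, the corner $\epsilon R\epsilon$ is a graded division algebra isomorphic to $D$, with grading shifted by conjugation by $g_i$, which is trivial in the abelian case. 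This gives (a). Moreover, for primitive idempotents $\epsilon_i,\epsilon_j$ in blocks $i$ and $j$, the support of $\epsilon_iR\epsilon_j$ is exactly $g_ig_j^{-1}\mathrm{Supp}\,D$. These coset differences are preserved by graded isomorphisms, so $g'_i(g'_j)^{-1}\in g_{\pi(i)}g_{\pi(j)}^{-1}\mathrm{Supp}\,D$ for all $i,j$. Setting $g=g_{\pi(1)}^{-1}g'_1$ modulo $\mathrm{Supp}\,D$ then yields $g'_i\in g_{\pi(i)}\,g\,\mathrm{Supp}\,D$, which is (c).

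The main obstacle I expect is making the idempotent-matching rigorous: showing that a graded isomorphism carries the block decomposition to the block decomposition with the same permutation for sizes and degrees, and that $\epsilon R\epsilon\cong D$ as graded algebras. Alternatively, since this is exactly \cite[Definition 2.11 and Corollary 2.12]{EK}, one can simply specialize that result to abelian $\tilde G$, where the conjugation twist on $D$ disappears.
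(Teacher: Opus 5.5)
Your proposal is correct. For the first assertion you do exactly what the paper does: Theorem \ref{teorema28deagosto}, followed by the normalization of the tuple described before Notation \ref{notation22set}.

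For the isomorphism criterion the routes differ. The paper gives no argument of its own and simply invokes Definition 2.11 and Corollary 2.12 of Elduque--Kochetov. Those results hold for arbitrary groups and come from the realization $R\cong\mathrm{End}_D(V)$ with $V$ a graded $D$-module, where graded isomorphisms are induced by semilinear module isomorphisms together with a shift and a conjugation twist on $D$. You instead give a direct Peirce-decomposition argument relative to the semisimple identity component $R_e\cong\bigoplus_i M_{k_i}(D_e)$. Your version is elementary and self-contained, and it shows exactly where commutativity of $\tilde G$ enters: the twist on $D$ and the left/right coset distinction disappear. The citation buys generality at the cost of heavier notation, as the paper itself remarks.

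The step you flag as the main obstacle is routine. A graded isomorphism $f$ restricts to an algebra isomorphism $R_e\to R'_e$. It therefore maps the central primitive idempotents $E_l=e_l\otimes 1$ of $R_e$ bijectively onto those of $R'_e$, say $f(E_{\pi(i)})=E'_i$, and no conjugation is needed. This single $\pi$ gives both conditions:
\begin{itemize}
\item $k'_i=k_{\pi(i)}$, by Wedderburn--Artin uniqueness applied to $f\colon E_{\pi(i)}R_eE_{\pi(i)}\to E'_iR'_eE'_i$;
\item $g'_i(g'_j)^{-1}\mathrm{Supp}\,D'=g_{\pi(i)}g_{\pi(j)}^{-1}\mathrm{Supp}\,D$, by comparing supports in $f(E_{\pi(i)}RE_{\pi(j)})=E'_iR'E'_j$.
\end{itemize}
For (a), $f(e_{aa}\otimes 1)$ is a primitive idempotent of $R'_e$. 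It is therefore conjugate, by a unit of $R'_e$, to some $e_{bb}\otimes 1$. Conjugation by a unit of degree $e$ is a graded automorphism, so $D\cong(e_{aa}\otimes 1)R(e_{aa}\otimes 1)\cong D'$ as graded algebras. This also supplies $\mathrm{Supp}\,D=\mathrm{Supp}\,D'$, which the coset computation needs.

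One small correction: $\mathrm{Supp}\,D$ is a subgroup because $D$ is a graded division algebra, not because $\tilde G$ is abelian. Abelianness is only needed for left and right cosets to coincide.
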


We observe that \cite[Corollary 2.12]{EK} holds for arbitrary groups, however the hypothesis that ``$\tilde{G}$ is abelian"
will be used in the main result of the next section and the more general version of the above result would lead to a more complicated notation.

If the group $G$ in Theorem \ref{teorema16denovembro} is assumed to be abelian, we may use Theorem \ref{teorema5deoutubro} above to obtain a classification of the $G$-gradings on finite matrix algebras up to isomorphism. This will be done in the next theorem. In order to proceed we shall need the following notation.

\begin{notation}
 Let $G$ be an abelian group such that every finite subgroup of $G$ is cyclic. Henceforth we assume that for each finite subgroup $T$ of $G$ a generator $\emph{\textbf{t}}$ is fixed, so $T=\langle \emph{\textbf{t}} \rangle$. If 
 \[
|T|=r, \ 1\leq b \leq r-1 \quad  \mbox{and}  \quad \emph{gcd}(r,b)=1, 
\]
 we denote by $D_{T,b}$ the $G$-graded algebra $D_{T, \mathbb{E}_r,\psi^{(r,b)}}$. Here $\psi^{(r,b)}$ is defined as in (\ref{11dejaneiro}). Moreover if $\kappa$ and $\gamma$ are as in Notation \ref{notation22set} we denote by $\mathcal{M}(T,b,\kappa,\gamma)$ the algebra $\mathcal{M}(D_{T,b},\kappa,\gamma)$.   
\end{notation}

\begin{theorem}\label{classicisom}
Let $\mathbb{F}$ be a finite field and let $G$ be an abelian group such that every finite subgroup of $G$ is cyclic.
If $M_n(\mathbb{F})$ is a $G$-graded algebra, then it is isomorphic to some $\mathcal{M}(T,b,\kappa,\gamma)$. 

Moreover two $G$-graded algebras $\mathcal{M}(T,b,\kappa,\gamma)$ and $\mathcal{M}(T^{\prime},b^{\prime},\kappa^{\prime},\gamma^{\prime})$ are isomorphic if and only  if all of the following items are satisfied:
\begin{itemize}
\item[(a)] $T= T'$ and $b=b'$, 
\item[(b)] $\kappa$ and $\kappa^{\prime}$ have the same number of components $s$,
\item[(c)] there exist an element $g\in G$, a permutation $\pi\in Sym(s)$, and $h_1$, \dots, $h_s\in T$
such that $k_i^{\prime}= k_{\pi(i)}$ and $g_i^{\prime}=g_{\pi(i)}h_ig$ for every $i$.
\end{itemize}  
\end{theorem}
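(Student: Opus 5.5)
The plan is to combine Theorem \ref{teorema16denovembro}, Theorem \ref{teorema2de16denovembro}, Theorem \ref{divisioniso} and Theorem \ref{teorema5deoutubro} with $\tilde{G}=G$ and $\mathbb{K}=\mathbb{F}$.

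For the existence part, let $M_n(\mathbb{F})$ carry a $G$-grading. By Theorem \ref{teorema16denovembro} it is graded isomorphic to $M_t(\mathbb{F})\otimes_{\mathbb{F}} D_{T,\mathbb{E},\psi}$ with the grading induced by some tuple ${\bf g}$. Here $T$ is a finite, hence cyclic, subgroup of $G$ with its fixed generator $\textbf{t}$, and $[\mathbb{E}:\mathbb{F}]=|T|=r$. By Theorem \ref{teorema2de16denovembro} we have $D_{T,\mathbb{E},\psi}\cong D_{T,\mathbb{E}_r,\psi^{(r,b)}}=D_{T,b}$ for some $b$ coprime to $r$. A graded isomorphism of division algebras $D\to D'$ tensored with the identity of $M_t(\mathbb{F})$ is a graded isomorphism of the induced gradings for the same tuple ${\bf g}$, because the degree formula (\ref{equacao2de5deoutubro}) depends only on $\deg d$. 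We may then normalize ${\bf g}$ as in (\ref{tupleg}) and (\ref{equacao5deoutubro}), using $\mathrm{Supp}\,D_{T,b}=T$, which gives $\mathcal{M}(T,b,\kappa,\gamma)$. Alternatively, apply the first part of Theorem \ref{teorema5deoutubro} directly and then replace $D$ by $D_{T,b}$.

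For the isomorphism criterion, apply the second part of Theorem \ref{teorema5deoutubro} with $D=D_{T,b}$ and $D'=D_{T',b'}$. Conditions (b) and (c) transfer verbatim, since $\mathrm{Supp}\,D_{T,b}=T$. So the only work is to show that $D_{T,b}\cong D_{T',b'}$ as $G$-graded algebras if and only if $T=T'$ and $b=b'$.

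The "if" direction is trivial. For "only if", a graded isomorphism preserves supports, so $T=\mathrm{Supp}\,D_{T,b}=\mathrm{Supp}\,D_{T',b'}=T'$. Then $r=|T|$ is common to both, and both algebras are $D_{T,\mathbb{E}_r,\psi}$ for the isomorphisms $\psi^{(r,b)}$ and $\psi^{(r,b')}$ from $T$ to $\mathrm{Gal}(\mathbb{E}_r/\mathbb{F})$. Theorem \ref{divisioniso} gives $\psi^{(r,b)}=\psi^{(r,b')}$. Evaluating at $\textbf{t}$ yields $\zeta^{ab}=\zeta^{ab'}$, and since $\zeta^a$ has order $r$ this means $b\equiv b'\pmod r$. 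With $1\le b,b'\le r-1$, we get $b=b'$.

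The main point needing care is the degenerate case $r=1$, where the range $1\le b\le r-1$ is empty. There I would take the convention $D_{T,b}=\mathbb{F}$ with trivial grading and $b$ irrelevant. A secondary point is checking that the normalization (\ref{tupleg}) is compatible with replacing $D$ by an isomorphic graded division algebra, which holds because only $\mathrm{Supp}\,D$ enters (\ref{equacao5deoutubro}). Otherwise the proof is a direct assembly of the earlier results.
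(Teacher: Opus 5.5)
Your proposal is correct and follows essentially the same route as the paper. For existence you use Theorems \ref{teorema16denovembro}, \ref{teorema2de16denovembro} and \ref{teorema5deoutubro}; for the criterion you use Theorem \ref{teorema5deoutubro} to reduce to deciding when $D_{T,b}\cong D_{T',b'}$, and settle that with Theorem \ref{divisioniso}. You also make explicit three steps the paper leaves implicit: support preservation giving $T=T'$, the deduction $b\equiv b'\pmod r$ from $\zeta^{ab}=\zeta^{ab'}$, and the degenerate case $r=1$, where the range $1\le b\le r-1$ is empty.
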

\begin{proof}
Theorem \ref{teorema16denovembro} and Theorem \ref{teorema5deoutubro} imply that $M_n(\mathbb{F})$ is isomorphic to some $\mathcal{M}(D_{T,\mathbb{E},\psi}, \kappa,\gamma)$. Theorem \ref{teorema2de16denovembro} implies that $D_{T,\mathbb{E},\psi}$ is isomorphic to some $D_{T,b}$. We apply Theorem \ref{teorema5deoutubro} to conclude that $M_n(\mathbb{F})$ is isomorphic to some $\mathcal{M}(T,b,\kappa,\gamma)$.

Theorem \ref{divisioniso} gives us that the algebras $D_{T,b}$ and $D_{T',b'}$ are isomorphic if and only if $T=T'$ and $b=b'$. Hence the claim about the algebras $\mathcal{M}(T,b,\kappa,\gamma)$ and $\mathcal{M}(T^{\prime},b^{\prime},\kappa^{\prime},\gamma^{\prime})$ follows from Theorem \ref{teorema5deoutubro}.
\end{proof}

In what follows, we again make some comments concerning the results obtained in \cite{BDW}. Theorem \ref{teorema16denovembro} and \cite[Theorem~5.1]{BDW} are, in general, incomparable. Indeed, \cite{BDW} treats an arbitrary field but only prime matrix sizes, whereas Theorem \ref{teorema16denovembro} considers an arbitrary matrix size $n$, but only finite fields and groups whose finite subgroups are all cyclic. When $n=p$ is prime and the field is finite, the two results do overlap. In this case, \cite{BDW} provides a stronger result, since it also covers gradings with non-cyclic support. This case is not vacuous over finite fields. For example, over $\mathbb{F}_3$, consider the matrices
\[
x=
\begin{pmatrix}
1 & 0\\
0 & -1
\end{pmatrix}
\qquad\text{and}\qquad
y=
\begin{pmatrix}
0 & 1\\
1 & 0
\end{pmatrix}.
\]
They satisfy
\[
x^2=y^2=1
\qquad\text{and}\qquad
xy=-yx.
\]
Moreover, the four homogeneous components
\[
\mathbb{F}_3\cdot 1,\qquad
\mathbb{F}_3\cdot x,\qquad
\mathbb{F}_3\cdot y,\qquad
\mathbb{F}_3\cdot xy
\]
consist entirely of invertible elements. Thus, they define a division grading on $M_2(\mathbb{F}_3)$ whose support is the Klein four-group, and hence is non-cyclic.
The hypothesis that every finite subgroup of $G$ be cyclic therefore plays a crucial role. It excludes non-cyclic supports, so that the cyclic algebra description in our Theorem \ref{DivisionMatrix} applies to every division grading that can occur.

\section{Graded Polynomial Identities in Finite Matrix Algebras}\label{gradedpoly}

In this section, $\mathbb{F}$ is a finite field with $q$ elements, $G$ is an abelian group such that every finite subgroup of $G$ is cyclic, and $R$ and $R'$ are $G$-graded matrix algebras over $\mathbb{F}$ satisfying the same $G$-graded polynomial identities. Our main goal here is to prove that $R$ and $R'$ are isomorphic as $G$-graded algebras. 	
From now on, by Theorem \ref{classicisom}, we assume $R=\mathcal{M}(T,b,\kappa,\gamma)$ and $R^{\prime}=\mathcal{M}(T^{\prime},b^{\prime},\kappa^{\prime},\gamma^{\prime})$, and we write 
\[
\kappa=(k_1,\dots, k_s),\gamma=(g_1,\dots, g_s),\kappa'=(k_1',\dots, k_{s'}') \  \mbox{and} \ \gamma'=(g_1',\dots, g_{s'}').
\]
We have $R=A\otimes_{\mathbb{F}} D=M_t(D)$ where $A=M_t(\mathbb{F})$ for $t=k_1+\cdots+k_s$ and $D=D_{T,b}$.  Henceforth we assume that $A$ is a $G$-graded algebra with respect to the elementary $G$-grading induced by the tuple  
\[
(\underbrace{g_1,\ldots ,g_1}_{k_1},\underbrace{g_2,\ldots ,g_2}_{k_2},\ldots ,\underbrace{g_s,\ldots ,g_s}_{k_s}).
\] 
Since the group $G$ is abelian we have 
\[
\deg(e_{i,j}\otimes d)= (\deg e_{i,j})(\deg d),
\]   
where $d$ is a homogeneous element of $D$.
It follows from (\ref{equacao2de5deoutubro}) and (\ref{equacao5deoutubro}) that 
 \begin{equation} \label{18deoutubrob}
     R_e=A_e\otimes_{\mathbb{F}} D_e.
 \end{equation}

 In order to prove the main result of this section it will be useful to consider certain ordinary identities of $R_e$ as graded identities of $R$, hence we need to establish some notation to better understand the neutral component $R_e$. We recall that 
 \begin{align}\label{isomde}
 D_e=(D_{T,b})_e=\mathbb{E}_rY_e\cong \mathbb{E}_r
 \end{align}
 is a field extension of $\mathbb{F}$ of degree $r=| T |$. Henceforth $r=| T | =[D_e:\mathbb{F}]$. In order to describe $A_e$ we denote 
 $$I_1=\{1,\dots, k_1\} \ \ \mbox{and} \ \ I_l=\{k_1+\cdots+k_{l-1}+1,\ldots ,k_1+\cdots+k_l\}$$ for $2\leq l \leq s$. Therefore we have  
 \[
\deg e_{i,j}=g_ag_b^{-1} \  \mbox{for} \  i\in I_a,  j\in I_b.
\]  
Note that $e_{i,j}\in A_e$ if and only if there exists $l$ such that $i$, $j\in I_l$. The set $\{e_{i,j}\mid i,j\in I_l\}$ spans a subalgebra of $A_e$ that we denote by $A_{e,l}$. We have 
\begin{equation}\label{reisom}
A_{e,l}\cong M_{k_l}(\mathbb{F}) \ \mbox{and} \ A_e=A_{e,1}\oplus \cdots \oplus A_{e,s}.
\end{equation}

We observe that (\ref{isomde}) and (\ref{reisom}) imply that $R_e\cong M_{k_1}(\mathbb{E}_r)\oplus \cdots \oplus M_{k_s}(\mathbb{E}_r)$. This clearly gives us the next result.

\begin{lemma}\label{idre}
$Id(R_e)=Id(M_m(\mathbb{E}_r))$, where $m=\max\{k_1,\dots, k_s\}$ and furthermore $r=| T | =[D_e:\mathbb{F}]$.
\end{lemma}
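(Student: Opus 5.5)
The plan is to use the decomposition $R_e\cong M_{k_1}(\mathbb{E}_r)\oplus\cdots\oplus M_{k_s}(\mathbb{E}_r)$ obtained from (\ref{18deoutubrob}), (\ref{isomde}) and (\ref{reisom}). Indeed $R_e=A_e\otimes_{\mathbb{F}} D_e=\bigoplus_l A_{e,l}\otimes_{\mathbb{F}} D_e$, and $A_{e,l}\otimes_{\mathbb{F}}D_e\cong M_{k_l}(\mathbb{F})\otimes_{\mathbb{F}}\mathbb{E}_r\cong M_{k_l}(\mathbb{E}_r)$. The lemma then reduces to two standard facts about T-ideals: the identities of a direct sum are the intersection of the identities of the summands, and $Id(M_k(\mathbb{E}_r))\supseteq Id(M_{m}(\mathbb{E}_r))$ whenever $k\le m$.

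The first step is to note that a polynomial vanishes on a direct sum $B_1\oplus\cdots\oplus B_s$ if and only if it vanishes on each $B_l$. This holds because evaluation is computed componentwise, and each $B_l$ embeds as a subalgebra; the embedding is non-unital, but that is harmless since polynomials here have coefficients in $\mathbb{F}$ and evaluation is componentwise. Hence $Id(R_e)=\bigcap_{l}Id(M_{k_l}(\mathbb{E}_r))$.

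The second step is to observe that for $k\le m$ the algebra $M_k(\mathbb{E}_r)$ is isomorphic to a subalgebra of $M_m(\mathbb{E}_r)$, namely the upper-left corner $\mathrm{Span}\{\rho e_{i,j}\mid 1\le i,j\le k,\ \rho\in\mathbb{E}_r\}$. Identities pass to subalgebras, so $Id(M_m(\mathbb{E}_r))\subseteq Id(M_{k_l}(\mathbb{E}_r))$ for every $l$. Since $m=k_{l_0}$ for some $l_0$, the intersection equals $Id(M_m(\mathbb{E}_r))$.

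Finally, $r=|T|=[D_e:\mathbb{F}]$ is just Theorem \ref{DivisionMatrix}(b) together with (\ref{isomde}). I do not expect any real obstacle; the only point needing care is the identification $R_e=A_e\otimes D_e$. This relies on (\ref{equacao5deoutubro}): $\deg(e_{i,j}\otimes d)=g_ag_b^{-1}\deg d=e$ forces $g_a^{-1}g_b\in T$, hence $a=b$ and $\deg d=e$. That identification has already been recorded in (\ref{18deoutubrob}), so I will simply cite it.
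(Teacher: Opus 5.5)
Your proposal is correct and takes the same approach as the paper, which simply observes that $R_e\cong M_{k_1}(\mathbb{E}_r)\oplus\cdots\oplus M_{k_s}(\mathbb{E}_r)$ and says the lemma clearly follows. You make explicit the two standard facts behind that ``clearly'': the identities of a finite direct sum are the intersection of the summands' T-ideals, and $M_k(\mathbb{E}_r)$ embeds as a corner of $M_m(\mathbb{E}_r)$ when $k\le m$.
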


We use the analogous notation for $R^{\prime}$ and obtain the following statement as an immediate consequence of Lemma \ref{idre}.

\begin{proposition}\label{field}
$[D_e:\mathbb{F}]=[D_e^{\prime}:\mathbb{F}]$.
\end{proposition}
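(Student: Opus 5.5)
The plan is to pass from the graded identities of $R$ and $R'$ to the ordinary identities of their neutral components, and then use the central polynomial argument of Proposition \ref{ordinarymain} to detect the size of the field $D_e$.

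\emph{Step 1: identities of $R_e$ are graded identities.} Any ordinary polynomial $f(x_1,\dots,x_n)\in \mathbb{F}\langle X\rangle$ can be read as a graded polynomial $f(x_{1,e},\dots,x_{n,e})$ in variables of neutral degree. It is a graded identity of $R$ exactly when $f$ is an ordinary identity of $R_e$. Since $Id_G(R)=Id_G(R')$, we get $Id(R_e)=Id(R'_e)$.

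\emph{Step 2: reduce to full matrix algebras.} By Lemma \ref{idre} we have $Id(R_e)=Id(M_m(\mathbb{E}_r))$ with $m=\max k_i$ and $r=[D_e:\mathbb{F}]$. Similarly $Id(R'_e)=Id(M_{m'}(\mathbb{E}_{r'}))$ with $r'=[D'_e:\mathbb{F}]$. Hence the finite simple algebras $M_m(\mathbb{E}_r)$ and $M_{m'}(\mathbb{E}_{r'})$ have the same ordinary identities.

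\emph{Step 3: recover the field.} Proposition \ref{ordinarymain} now gives $M_m(\mathbb{E}_r)\cong M_{m'}(\mathbb{E}_{r'})$ as $\mathbb{F}$-algebras. Comparing centres yields $\mathbb{E}_r\cong \mathbb{E}_{r'}$ over $\mathbb{F}$, so $r=r'$. For a more direct argument, Amitsur--Levitzki gives $m=m'$, and then the Formanek polynomial test $F_m^{q^r}-F_m$ from the proof of Proposition \ref{ordinarymain} gives $r=r'$.

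There is no real obstacle. The only point needing care is Step 1: one must check that evaluating neutral-degree variables on $R$ is exactly evaluating on $R_e$, which holds by definition of graded substitution. Lemma \ref{idre} itself, which relies on $M_k(\mathbb{E}_r)$ embedding in $M_m(\mathbb{E}_r)$ for $k\le m$, is taken as given.
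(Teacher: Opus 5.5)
Your proposal is correct and follows essentially the same route as the paper. You restrict to neutral-degree variables to get $Id(R_e)=Id(R'_e)$, use Lemma~\ref{idre} to pass to $M_m(\mathbb{E}_r)$ and $M_{m'}(\mathbb{E}_{r'})$, and apply Proposition~\ref{ordinarymain} with Amitsur--Levitzki; your final comparison of centres is exactly the observation the paper records right after its proof.
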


\begin{proof}
     Since $R$ and $R^{\prime}$ satisfy the same graded identities we conclude that $Id(R_e)=Id(R_e^{\prime})$, hence Lemma \ref{idre} implies that $Id(M_m(\mathbb{E}_r))=Id(M_{m^{\prime}}(\mathbb{E}_{r^{\prime}}))$. Proposition \ref{ordinarymain} and Theorem \ref{teoremaamitsurlevitzki5deoutubro} imply that $M_m(\mathbb{E}_r)\cong M_{m^{\prime}}(\mathbb{E}_{r^{\prime}})$ and $m=m'$, hence $\mathbb{E}_{r}\cong \mathbb{E}_{r^{\prime}}$. Thus $[D_e:\mathbb{F}]=r=r^{\prime}=[D_e^{\prime}:\mathbb{F}]$.
\end{proof}

Observe that one can deduce the fact that $\mathbb{E}_{r}\cong \mathbb{E}_{r^{\prime}}$ directly from the isomorphism of the matrix algebras $M_m(\mathbb{E}_r)\cong M_{m^{\prime}}(\mathbb{E}_{r^{\prime}})$, since $\mathbb{E}_{r}$ and $\mathbb{E}_{r^{\prime}}$ are the respective centres of these matrix algebras.

\begin{remark}\label{r}
We recall that $D=D_{T,b}=D_{T, \mathbb{E}_r,\psi^{(r,b)}}$, where $r=| T| =[D_e:\mathbb{F}]$, and $D^{\prime}=D_{T^{\prime},b^{\prime}}=D_{T^{\prime}, \mathbb{E}_{r^{\prime}},\psi^{(r^{\prime},b^{\prime})}}$, where $r^{\prime}=| T^{\prime}| =[D_e^{\prime}:\mathbb{F}]$. It follows from Proposition \ref{field} that $r=r^{\prime}$.
\end{remark}

\begin{lemma} \label{lema17deoutubroa}
	If $r=[D_e: \mathbb{F}]$ and $\displaystyle \alpha=\frac{q^r-1}{q-1}$, then 
 \[\begin{array}{ccc}
      D_e&\rightarrow  & \mathbb{F}\\
      \delta&\mapsto & \delta^{\alpha}
 \end{array}\]
 is an onto map.
\end{lemma}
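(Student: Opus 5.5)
The plan is to identify the map with the field norm. By (\ref{isomde}), $D_e\cong\mathbb{E}_r$ is a field with $|D_e|=q^r$ and $[D_e:\mathbb{F}]=r$. As in the Comments preceding Theorem \ref{teorema2de16denovembro}, $\mathrm{Gal}(D_e/\mathbb{F})$ is cyclic of order $r$. Its generator is the $q$-power Frobenius $\sigma(x)=x^q$, which is $\zeta^a$ when $q=p^a$. Hence for $\delta\in D_e^*$ the norm $N\colon D_e^*\to\mathbb{F}^*$ used in the proof of Theorem \ref{DivisionMatrix} is
\[
N(\delta)=\prod_{i=0}^{r-1}\sigma^i(\delta)=\prod_{i=0}^{r-1}\delta^{q^i}=\delta^{1+q+\cdots+q^{r-1}}=\delta^{\alpha},
\]
where $\alpha=\frac{q^r-1}{q-1}$.

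Two points follow. First, the map $\delta\mapsto\delta^\alpha$ really lands in $\mathbb{F}$: for $\delta=0$ the image is $0$, and for $\delta\neq 0$ we have $N(\delta)\in\mathbb{F}^*$. One can also check this directly, since $(\delta^\alpha)^{q-1}=\delta^{q^r-1}=1$, so $\delta^\alpha$ is a root of $x^q-x$, and the roots of $x^q-x$ in $D_e$ are exactly the elements of $\mathbb{F}$. Second, surjectivity onto $\mathbb{F}^*$ is the surjectivity of the norm, quoted from \cite[Page 838]{DF} in the proof of Theorem \ref{DivisionMatrix}. Together with $0\mapsto 0$, this gives that the map is onto $\mathbb{F}$.

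To avoid depending on the citation, I would add a self-contained count. The group $D_e^*$ is cyclic of order $q^r-1$; let $\omega$ be a generator. Then $\omega^\alpha$ has order $(q^r-1)/\alpha=q-1$, so the powers of $\omega^\alpha$ fill the subgroup of order $q-1$. That subgroup is $\mathbb{F}^*$, because $\mathbb{F}^*\subseteq D_e^*$ has order $q-1$ and a cyclic group has a unique subgroup of each order.

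The only point needing care is identifying the generator of $\mathrm{Gal}(D_e/\mathbb{F})$ as $x\mapsto x^q$, since that is what turns the product of Galois conjugates into the single power $\delta^\alpha$. The cyclic-group argument bypasses this step entirely, so I would present that version as the main proof.
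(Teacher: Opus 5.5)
Your proposal is correct, and the version you would present as the main proof is essentially the paper's argument: take a generator $\omega$ of the cyclic group $D_e^*$ and note that $\omega^{\alpha}$ generates $\mathbb{F}^*$ (you also spell out why its powers fill exactly $\mathbb{F}^*$, via uniqueness of subgroups of a given order). Your identification of $\delta\mapsto\delta^{\alpha}$ with the norm $N$ is a valid extra observation that ties the lemma to the norm used in Theorem~\ref{DivisionMatrix}, but the proof does not need it.
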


\begin{proof}
    $D_e$ is a field and $D_e\setminus\{0\}$ is a cyclic multiplicative  group of order $q^{r}-1$, see for example \cite[Theorem 3.30]{R}. Denoting by $d$ its generator, we have that $d^{\alpha}$ is a generator of the cyclic subgroup $\mathbb{F}\setminus\{0\}$. Therefore $\delta\mapsto \delta^{\alpha}$ maps $D_e$ onto $\mathbb{F}$.
    \end{proof}

From now on we assume, by Theorem \ref{classicisom}, that $k_1\geq \cdots \geq k_s$ in $\kappa$. Let $m=\max\{k_1,\dots, k_s\}$, and 
let $w$ be the positive integer such that $k_1=\cdots =k_w=m$ and $k_i<m$ if $i>w$. 

In the next result we determine the support of $D$ by means of the $G$-graded polynomial identities of $R$. To this end, it is important to recall the Formanek polynomial $F_m$ from Theorem \ref{teoremadeformanek}. The following remark will be also useful in the proof of the next lemma.

\begin{remark}\label{reme}
The matrix $e_l=\sum_{i\in I_l}e_{i,i}$ is the identity of $A_{e,l}$, and a matrix $e_{i,j}\in A$ lies in $A_e$ if and only if $e_le_{i,j}\neq 0$ and $e_{i,j}e_l\neq 0$ for some $l$.
\end{remark}

\begin{lemma} \label{18deoutubroa}
Consider $R=\mathcal{M}(T,b,\kappa,\gamma)=M_t(\mathbb{F})\otimes D$ and $m=\max\{k_1,\dots, k_s\}$ as before. Denote by 
$F_m=F_m(x_{1,e},y_{1,e},\dots, y_{m,e})$ the Formanek polynomial and let $g\in G$. Then 
\[g\in \mathrm{Supp}\, D \ \mbox{if and only if} \  F_mx_{2,g}\notin Id_G(R)  \ \mbox{and} \  [F_m^{\alpha}, x_{2,g}]\in Id_G(R),\] where $\displaystyle \alpha=\frac{q^r-1}{q-1}$ and $r=[D_e: \mathbb{F}]$.
\end{lemma}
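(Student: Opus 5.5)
The plan is to compute the set of values of $F_m$ on $R_e$ and then test these values against the homogeneous component $R_g$. By (\ref{18deoutubrob}), (\ref{isomde}) and (\ref{reisom}) we have $R_e=\bigoplus_{l=1}^s A_{e,l}\otimes D_e$ with $A_{e,l}\otimes D_e\cong M_{k_l}(\mathbb{E}_r)$. The blocks are orthogonal, so an evaluation of $F_m$ on $R_e$ is computed blockwise.

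The first step is to show that $F_m$ vanishes on each block with $k_l<m$. This uses the standard fact that Formanek's polynomial for $M_m$ is an identity of $M_k$ for $k<m$ (it can also be obtained from its construction); this is the step I expect to need the most care, since it is not among the results quoted earlier and must be cited or justified. On the blocks with $l\le w$, Theorem \ref{teoremadeformanek} and (\ref{24denovembro1}) show that $F_m$ takes exactly the values $\lambda e_l\otimes 1$ with $\lambda\in \mathbb{E}_r$. Choosing substitutions supported block by block, we get
\[
F_m(R_e)=\Bigl\{\sum_{l=1}^{w}\lambda_l\, e_l\otimes 1 \;\Bigm|\; \lambda_l\in D_e\Bigr\}.
\]
By Lemma \ref{lema17deoutubroa}, $F_m^{\alpha}(R_e)=\{\sum_{l\le w}c_l\, e_l\otimes 1\mid c_l\in\mathbb{F}\}$, and every choice of the $c_l$ occurs. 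In particular $e_1\otimes 1$ is a value of $F_m^{\alpha}$, and so is $e_a\otimes 1$ for each $a\le w$.

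For the direct implication, let $g\in \mathrm{Supp}\, D=T$. An element $e_{i,j}\otimes d$ with $i\in I_a$, $j\in I_b$ and $d\in D_h$ has degree $g_ag_b^{-1}h$. This degree lies in $T$ only if $g_a^{-1}g_b\in T$, which by (\ref{equacao5deoutubro}) forces $a=b$. Hence
\[
R_g=\bigoplus_{l}A_{e,l}\otimes D_g .
\]
Each element of this space commutes with every $e_l\otimes 1$ and with the scalars in $\mathbb{F}$, so $[F_m^{\alpha},x_{2,g}]\in Id_G(R)$. Moreover, substituting $F_m\mapsto e_1\otimes 1$ and $x_{2,g}\mapsto e_{1,1}\otimes X_g\neq 0$ (with $X_g\in D_g$ nonzero) gives a nonzero product, so $F_mx_{2,g}\notin Id_G(R)$.

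For the converse, let $g\notin T$ and assume $F_mx_{2,g}\notin Id_G(R)$. Then $R_g\neq 0$, and $R_g$ is spanned by elements $e_{i,j}\otimes d$ with $i\in I_a$, $j\in I_b$, $a\neq b$, since $a=b$ would force $g\in T$. Because $F_m(R_e)$ is supported on the blocks $l\le w$, there must be such a spanning element $z=e_{i,j}\otimes d$ with $a\le w$, for otherwise every product $F_m\cdot R_g$ would vanish. Substitute $F_m^{\alpha}\mapsto e_a\otimes 1$ (taking $c_a=1$ and the other $c_l=0$) and $x_{2,g}\mapsto z$. Since $e_{i,j}e_a=0$ for $b\neq a$, we get $[e_a\otimes1,z]=z\neq0$. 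Thus $[F_m^{\alpha},x_{2,g}]\notin Id_G(R)$, which proves the converse.
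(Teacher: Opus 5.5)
Your proposal is correct and follows essentially the same route as the paper: compute $F_m(R_e)=\bigoplus_{l\le w}e_l\otimes D_e$, then use Lemma \ref{lema17deoutubroa} to get $F_m^{\alpha}(R_e)=\bigoplus_{l\le w}e_l\otimes\mathbb{F}$, and compare left and right multiplication by $e_l\otimes 1$ on a spanning element $e_{i,j}\otimes d$ of $R_g$; for the vanishing of $F_m$ on the smaller blocks, the step you flagged, the paper cites \cite[Exercise 7.3.23]{D}. The only cosmetic difference is that you prove the converse by contrapositive (for $g\notin T$ you exhibit an off-diagonal-block element with nonzero left product, hence a nonzero commutator), whereas the paper assumes both conditions and derives $i,j\in I_l$ directly.
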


\begin{proof}
Let $F_m(R_e)$ be the set of all elements  
\[F_m(X_{1,e},Y_{1,e},\dots, Y_{m,e})\]
such that $X_{1,e},Y_{1,e},\dots, Y_{m,e}\in R_e$.
By Theorem \ref{teoremadeformanek} and \cite[Exercise 7.3.23]{D} we obtain 
    \begin{equation} \label{equacaodofm}
    F_m(R_e)=(e_1\otimes D_e)\oplus \cdots \oplus (e_w\otimes D_e).
    \end{equation}
    
    Let $g\in \mathrm{Supp}\, D$.  
     By (\ref{equacao2de5deoutubro}) and (\ref{equacao5deoutubro}) we have
 \[R_g=A_e\otimes_{\mathbb{F}} D_g.\]
The element $r_g=e_{1,1}\otimes d_g$, where $d_g$ is a non-zero element of $D_g$, lies in $R_g$. Moreover, $e_1\otimes 1\in F_m(R_e)$ and $(e_1\otimes 1)r_g=r_g\neq 0$, therefore $F_mx_{2,g}\notin Id_G(R)$. By Lemma \ref{lema17deoutubroa}, $$F_m(R_e)^{\alpha}=(e_1\otimes \mathbb{F})\oplus \cdots \oplus (e_w\otimes \mathbb{F}),$$
which commutes with every element in $R_g$. Therefore $[F_m^{\alpha}, x_{2,g}]\in Id_G(R)$. 

Now, let $g\in G$ be such that $F_mx_{2,g}\notin Id_G(R)$ and $[F_m^{\alpha}, x_{2,g}]\in Id_G(R)$. Then for some $e_{i,j}\otimes d_t\in R_g$, where $t\in \mathrm{Supp}\, D$ and $d_t\in D_t$, and some $(e_l\otimes 1)$ we have $(e_l\otimes 1)(e_{i,j}\otimes d_t)\neq 0$. We claim that $(e_{i,j}\otimes d_t)(e_l\otimes 1)\neq 0$. Indeed, assuming that $(e_{i,j}\otimes d_t)(e_l\otimes 1)= 0$, since $(e_l\otimes 1)\in F_m(R_e)^{\alpha}$ and 
\[
[(e_l\otimes 1),(e_{i,j}\otimes d_t)]=(e_l\otimes 1)(e_{i,j}\otimes d_t)\neq 0,
\] 
we conclude that $[F_m^{\alpha}, x_{2,g}]\notin Id_G(R)$, a contradiction. Hence,  $(e_l\otimes 1)(e_{i,j}\otimes d_t)\neq 0$ and $(e_{i,j}\otimes d_t)(e_l\otimes 1)\neq 0$ imply that $i$, $j\in I_l$ and therefore Remark \ref{reme} yields that $e_{i,j}\in A_e$. Since the group $G$ is abelian it follows that $e_{i,j}\otimes d_t$ is of degree $g=t\in \mathrm{Supp}\, D$. This concludes the proof.
\end{proof}

\begin{proposition} \label{proposicao17denovembro1}
Consider $R=\mathcal{M}(T,b,\kappa,\gamma)=M_t(\mathbb{F})\otimes D$ and $R^{\prime}=\mathcal{M}(T^{\prime},b^{\prime},\kappa^{\prime},\gamma^{\prime})=M_{t^{\prime}}(\mathbb{F})\otimes D^{\prime}$ as before, that is, satisfying $Id_G(R)=Id_G(R')$.  
Then $\kappa$ and $\kappa^{\prime}$ have the same number of components $s$, there exist an element $g\in G$, a permutation $\pi\in Sym(s)$ and $h_1$, \dots, $h_s\in\mathrm{Supp}\, D$ such that $k_i^{\prime}= k_{\pi(i)}$ and $g_i^{\prime}=g_{\pi(i)}h_ig$ for every $i$.
\end{proposition}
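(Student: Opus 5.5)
The plan is to encode the block data of $R$ into graded polynomials which are non-identities of $R$, transfer them to $R'$ using $Id_G(R)=Id_G(R')$, and then use symmetry and a counting argument to force equality.

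\emph{Preliminary reductions.} By Lemma \ref{18deoutubroa} (with Remark \ref{r}, which gives $r=r'$), $\mathrm{Supp}\, D=\mathrm{Supp}\, D'$, so $T=T'$. Graded identities determine ordinary identities, so Theorem \ref{teoremaamitsurlevitzki5deoutubro} gives $tr=t'r$, hence $t=t'$. By (\ref{equacao5deoutubro}) the cosets $g_1T,\dots,g_sT$ are pairwise distinct. Thus $R$ is encoded by the labelled set $\{(g_iT,k_i)\}\subset G/T\times\mathbb{Z}_{>0}$, and the statement asks that these labelled sets for $R$ and $R'$ agree up to a translation $gT$.

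\emph{Block-localizing neutral polynomials.} For $k\ge 1$ let $P_k$ be a Capelli-type polynomial in neutral variables with $k^2$ alternating variables, for instance $c_{k^2}(x_{1,e},\dots;z_{1,e},\dots)$. Such a polynomial vanishes on $M_j(\mathbb{E}_r)$ for $j<k$, and on $M_k(\mathbb{E}_r)$ it can be evaluated to a single matrix unit. Since $R_e=\oplus_l A_{e,l}\otimes D_e$ with the $A_{e,l}\otimes D_e$ mutually orthogonal, every value of $P_k$ on $R_e$ lies in $\oplus_{k_l\ge k} e_lR_ee_l$. Conversely, any single matrix unit $e_{ii}\otimes 1$ with $i\in I_l$ and $k_l\ge k$ is a value of $P_k$.

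\emph{The chain polynomial.} For degrees $h_1,\dots,h_{s-1}\in G$ and sizes $k_1,\dots,k_s$ consider
\[
\Phi=P_{k_1}\,x_{1,h_1}\,P_{k_2}\,x_{2,h_2}\cdots x_{s-1,h_{s-1}}\,P_{k_s},
\]
where all the $P$'s use disjoint sets of variables. A nonzero homogeneous element of degree $h$ sends block $b$ to block $a$ only when $g_ag_b^{-1}\in hT$. Hence $\Phi\notin Id_G(R)$ if and only if there are blocks $a_1,\dots,a_s$ such that $g_{a_{j}}g_{a_{j+1}}^{-1}\in h_jT$ and $k_{a_j}\ge k_j$ for every $j$. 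The "if" direction uses the evaluation $e_{i_1i_1}, e_{i_1i_2}, e_{i_2i_2},\dots$ with $i_j\in I_{a_j}$.

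Now take $h_j=g_jg_{j+1}^{-1}$ and the sizes $k_j$ of $R$. Then $\Phi\notin Id_G(R)$ by the chain $a_j=j$, so $\Phi\notin Id_G(R')$. This produces blocks $a'_1,\dots,a'_s$ of $R'$ with $g'_{a'_j}T=g_jT\cdot g$ for $g=g_1^{-1}g'_{a'_1}$, and with $k'_{a'_j}\ge k_j$. The $a'_j$ are distinct because the cosets $g_jT$ are distinct. This gives an injection $\pi^{-1}\colon i\mapsto a'_i$ with $g'_{\pi^{-1}(i)}\in g_iTg$ and $k'_{\pi^{-1}(i)}\ge k_i$, so $s\le s'$.

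\emph{Conclusion.} By symmetry $s'\le s$, so $s=s'$ and $\pi$ is a permutation. Then $t=\sum k_i\le\sum k'_{\pi^{-1}(i)}=t'=t$ forces $k'_{\pi^{-1}(i)}=k_i$ for all $i$. Relabelling, this is exactly $k'_i=k_{\pi(i)}$ and $g'_i=g_{\pi(i)}h_ig$ with $h_i\in T=\mathrm{Supp}\, D$.

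The step I expect to be the main obstacle is the localization property of $P_k$. One must check carefully that products of the $P$-values with homogeneous elements really force a single chain of blocks, with no cancellation coming from mixed sums across blocks. It must also be checked, over a finite field (including the exceptional case of $M_2$ over $\mathbb{F}_2$), that the Capelli polynomial vanishes exactly on blocks of size $<k$ while still admitting matrix-unit values. If the Capelli polynomial misbehaves, I would replace $P_k$ by $e$-degree Formanek polynomials $F_k$ multiplied by a matrix-unit-producing alternating polynomial.
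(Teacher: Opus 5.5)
Your proposal is correct and follows the paper's proof: your $\Phi$ plays the role of the paper's $f_{\kappa,\gamma}$ (neutral block-size detectors interleaved with variables of degree $g_lg_{l+1}^{-1}$), and your coset bookkeeping modulo $T=\mathrm{Supp}\,D$ is a cleaner packaging of the paper's Claims 1 and 4. The differences are minor. The paper uses $s_{2k_l-1}$ with a staircase evaluation where you use $c_{k^2}$; the obstacles you flagged do not arise, since cancellation is excluded by multilinearity and orthogonality of the blocks, and the properties of $c_{k^2}$ you need hold over any field, including $\mathbb{F}_2$. The paper also finishes the size comparison with the reverse inequality and the ordering $k_1\ge\cdots\ge k_s$, rather than with $t=t'$.
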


\begin{proof}
By Proposition \ref{field} and Lemma \ref{18deoutubroa}, we have 
\[\mathrm{Supp}\, D=\mathrm{Supp}\, D^{\prime}.\]

Let $p_1=k_1$ and $p_{i}=p_{i-1}+k_i$ for $i=2$, \dots, $s$. We denote by $f_{\kappa,\gamma}$ the polynomial
\begin{align*}
&s_{2k_1-1}(x_{1,e},\dots, x_{2p_1-1,e})\cdot x_{2p_1,g_{1,2}}\cdot s_{2k_2-1}(x_{2p_1+1,e},\dots, x_{2p_2-1,e})\cdot  x_{2p_2,g_{2,3}}\cdot \cdots\\& \cdot x_{2p_{s-1},g_{s-1,s}} \cdot s_{2k_s-1}(x_{2p_{s-1}+1,e},\dots, x_{2p_s-1,e}),
\end{align*}
where $g_{l,l+1}=g_lg_{l+1}^{-1}$ for $1\leq l \leq s-1$. Replacing the variables 
\[
x_{1,e},\ x_{2,e}, \ x_{3,e}, \ x_{4,e}, \ \ldots  \ \ \mbox{by} \ \ e_{1,1}\otimes 1, \ e_{1,2}\otimes 1, \ e_{2,2}\otimes 1, \ e_{2,3}\otimes 1, \ \ldots \] 
we see that $f_{\kappa,\gamma}\notin Id_G(R)$. 

We recall that $\kappa^{\prime}=(k_1^{\prime},\dots, k_{s^{\prime}}^{\prime})$, $I_1^{\prime}=\{1,\dots, k_1^{\prime}\}$, and 
\[
I_l^{\prime}=\{k_1^{\prime}+\cdots+k_{l-1}^{\prime}+1,k_1^{\prime}+\cdots+k_{l-1}^{\prime}+2,\dots,k_1^{\prime}+\cdots+k_{l-1}^{\prime}+k_l^{\prime}\}
\] 
for $2\leq l\leq  s^{\prime}$. As in (\ref{18deoutubrob}), 
\[
R_e^{\prime}=\mathcal{R}_1^{\prime}\oplus\cdots \oplus \mathcal{R}_{s^{\prime}}^{\prime},
\] 
where $\mathcal{R}_l^{\prime}$ is the subspace spanned by the elements $e_{i,j}\otimes \delta$ where $i$, $j\in I_l^{\prime}$ and $\delta \in D_e^{\prime}$. By hypothesis $f_{\kappa,\gamma}\notin Id_G(R')$, hence there exist $u_1$, \dots,  $u_s\in \{1,\dots, s^{\prime}\}$, $a_1\in s_{2k_1-1}(\mathcal{R}_{u_1}^{\prime})$, \dots, $a_s\in s_{2k_s-1}(\mathcal{R}_{u_s}^{\prime})$, and $b_1=e_{i_1,j_1}\otimes \delta_1\in R_{g_{1,2}}^{\prime}$, \dots, $b_{s-1}=e_{i_{s-1},j_{s-1}}\otimes \delta_{s-1}\in R_{g_{s-1,s}}^{\prime}$ such that 
\begin{equation}\label{9denovembro}
a_1b_1a_2b_2 \cdots a_{s-1}b_{s-1}a_s\neq 0.
\end{equation}

\noindent \emph{Claim 1.} $u_l\neq u_v$ whenever $l\neq v$.

\noindent \emph{Proof of Claim 1.} Without loss of generality we assume $l<v$. The element \[0\neq b_la_{l+1}b_{l+1}\cdots a_{v-1}b_{v-1}=e_{i_l,j_{v-1}}\otimes \delta\] is homogeneous of degree $g_lg_{v}^{-1}\notin \mathrm{Supp}\, D=\mathrm{Supp}\, D^{\prime}$. As a consequence the degree of $e_{i_{l},j_{v-1}}\otimes 1$ in $R^{\prime}$ is not equal to $e\in G$. Since $i_{l}\in I_{u_l}'$ and $j_{v}\in I_{u_v}'$ we conclude that $u_l\neq u_v$. Hence Claim 1 is proved.

\

\noindent \emph{Claim 2.} $\kappa$ and $\kappa^{\prime}$ have the same number of components $s$.

\noindent \emph{Proof of Claim 2.} Since $u_1$, \dots, $u_s\in \{1,\dots, s^{\prime}\}$, Claim 1 implies that $s\leq s^{\prime}$. Since $f_{\kappa^{\prime},\gamma^{\prime}} \notin Id_G(R)$,  an analogous argument implies that $s^{\prime}\leq s$. This proves the claim.

\

\noindent \emph{Claim 3.} There exists a permutation $\tau$ of the elements $\{1,\dots, s\}$ such that $k_i^{\prime}=k_{\tau(i)}$ for $i=1$, \dots, $s$.

\noindent \emph{Proof of Claim 3.} Claim 1 and Claim 2 imply that there exists a permutation $\tau$ of the elements $\{1,2,\dots, s\}$ such that $u_l=\tau(l)$ for $l=1$, \dots, $s$. By  (\ref{9denovembro}), the standard polynomial of degree $2k_l-1$ is not an identity for 
\[
\mathcal{R}_{\tau(l)}^{\prime}\cong M_{k_{\tau(l)}^{\prime}}(\mathbb{F})\otimes_{\mathbb{F}} D_e^{\prime},
\] 
therefore $k^{\prime}_{\tau(l)}\geq k_l$ for $l=1$, \dots, $s$. Since $f_{\kappa^{\prime},\gamma^{\prime}} \notin Id_G(R)$, there exists a permutation $\mu$ of the elements $\{1,2,\dots, s\}$ such that $k_{\mu(l)}\geq k_l^{\prime}$ for $l=1$, \dots, $s$. Taking into account the inequalities $k_1\geq \cdots \geq k_s$ and $k_1^{\prime}\geq \cdots \geq k_s^{\prime}$, this implies that $k_i=k_{\tau(i)}^{\prime}$ for $i=1$, \dots, $s$.

\

\noindent \emph{Claim 4.} There exist an element $g\in G$ and $h_1$, \dots,  $h_s\in\mathrm{Supp}\, D$ such that $g_i=g_{\tau(i)}^{\prime}h_ig$ for $i=1$,  \dots, $s$. 

\noindent \emph{Proof of Claim 4.} Recall that $a_lb_la_{l+1}\neq 0$, and $b_l=e_{i_l,j_l}\otimes \delta_l$ is homogeneous of degree $g_lg_{l+1}^{-1}$ in $R'$. Since $a_l\in \mathcal{R}_{\tau(l)}^{\prime}$ and $a_{l+1}\in \mathcal{R}_{\tau(l+1)}^{\prime}$ we conclude that $i_l\in I_{\tau(l)}^{\prime}$ and $j_l\in I_{\tau(l+1)}^{\prime}$, therefore 
\[
g_lg_{l+1}^{-1}=g_{\tau(l)}^{\prime}\deg (\delta_l)(g_{\tau(l+1)}^{\prime})^{-1}\] 
for $l=1$, \dots, $s-1$. Let $h_1$, \dots, $h_s$ be elements of $\mathrm{Supp}\, D^{\prime}$ such that $h_lh_{l+1}^{-1}=\mathrm{deg}\, (\delta_l)$ for $l=1$,  \dots, $s-1$. Then we have 
\[
g_lg_{l+1}^{-1}=g_{\tau(l)}^{\prime}h_lh_{l+1}^{-1}(g_{\tau(l+1)}^{\prime})^{-1},\]
and therefore 
\[
h_l^{-1}(g_{\tau(l)}^{\prime})^{-1}g_l=
h_{l+1}^{-1}(g_{\tau(l+1)}^{\prime})^{-1}g_{l+1}.
\] 
for $l=1$, \dots, $s-1$. If $g=h_1^{-1}(g_{\tau(1)}^{\prime})^{-1}g_1$, the equality above implies that $g_i=g_{\tau(i)}^{\prime}h_ig$ for $i=1$, \dots, $s$ as required.
\end{proof}

\begin{theorem}\label{main}
Let $\mathbb{F}$ be a finite field with $q$ elements, $G$ an abelian group such that every finite subgroup of $G$ is cyclic, $R=M_n(\mathbb{F})$ and $R'=M_{n'}(\mathbb{F})$ two $G$-graded matrix algebras over $\mathbb{F}$. Then $R$ and $R'$ are isomorphic as $G$-graded algebras if and only if $Id_G(R)=Id_G(R')$.
\end{theorem}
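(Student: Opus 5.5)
The forward direction is immediate: a graded isomorphism $R\to R'$ carries graded substitutions to graded substitutions, so $Id_G(R)=Id_G(R')$. For the converse, assume $Id_G(R)=Id_G(R')$. By Theorem \ref{classicisom} we write $R=\mathcal{M}(T,b,\kappa,\gamma)$ and $R'=\mathcal{M}(T',b',\kappa',\gamma')$ with the components of $\kappa,\kappa'$ in nonincreasing order. The aim is to verify items (a), (b), (c) of Theorem \ref{classicisom}. Items (b) and (c) are given by Proposition \ref{proposicao17denovembro1}; the roles of primed and unprimed can be swapped, and since $\mathrm{Supp}\,D=\mathrm{Supp}\,D'$ the elements $h_i$ may be taken in $T$. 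So the real work is item (a): $T=T'$ and $b=b'$.

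For $T=T'$, Remark \ref{r} gives $r=r'$, and then Lemma \ref{18deoutubroa}, applied to both algebras with the same $m$, the same $\alpha$, and the same identities, gives $T=\mathrm{Supp}\,D=\mathrm{Supp}\,D'=T'$. The equality of the $m$'s follows from Claim 3 of Proposition \ref{proposicao17denovembro1} or from the proof of Proposition \ref{field}. The fixed generator $\mathbf{t}$ is then common to both algebras.

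The main obstacle is $b=b'$. In $D_{T,b}$ we have $Y_{\mathbf t}\delta=\zeta^{ab}(\delta)Y_{\mathbf t}=\delta^{q^b}Y_{\mathbf t}$ for $\delta\in D_e$, where $q=p^a$. I would detect $b$ with the graded polynomial
\[
f_c=F_m^{\,q^c}\,x_{2,\mathbf t}-x_{2,\mathbf t}\,F_m,\qquad 1\le c\le r-1,
\]
with $F_m$ in neutral variables. By (\ref{equacaodofm}), $F_m(R_e)=\bigoplus_{l\le w}(e_l\otimes D_e)$, and $R_{\mathbf t}=A_e\otimes D_{\mathbf t}$. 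Take $z=\sum_l e_l\otimes\delta_l$ and $e_{i,j}\otimes\rho Y_{\mathbf t}$ with $i,j\in I_l$. Then
\[
(e_{i,j}\otimes\rho Y_{\mathbf t})\,z=e_{i,j}\otimes\rho\,\delta_l^{\,q^b}Y_{\mathbf t},
\qquad
z^{q^c}(e_{i,j}\otimes\rho Y_{\mathbf t})=e_{i,j}\otimes\delta_l^{\,q^c}\rho Y_{\mathbf t},
\]
and both vanish unless $l\le w$. Hence $f_c$ evaluates to zero on all of these elements exactly when $\delta^{q^c}=\delta^{q^b}$ for all $\delta\in D_e=\mathbb{E}_r$, that is, when $c\equiv b \pmod r$. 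Thus $f_c\in Id_G(R)$ if and only if $c=b$, and likewise for $R'$ with $b'$. Equality of the $T_G$-ideals then forces $b=b'$.

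Some care is needed in the verification. First, $f_c$ is not multilinear in $x_{2,\mathbf t}$, but it is linear in it, so checking it on the spanning elements $e_{i,j}\otimes\rho Y_{\mathbf t}$ suffices. Second, the value $z=e_1\otimes\delta$ with $\delta$ a primitive element must actually be attained by $F_m$ on $R_e$ (which (\ref{equacaodofm}) guarantees), so that non-identity is witnessed. With (a), (b), (c) established, Theorem \ref{classicisom} yields $R\cong R'$ as $G$-graded algebras.
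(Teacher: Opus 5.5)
Your proposal is correct and follows the paper's proof essentially step for step. You get $T=T'$ from Proposition \ref{field} together with Lemma \ref{18deoutubroa}, and $b=b'$ from the polynomial $x_{2,\mathbf t}F_m-F_m^{p^v}x_{2,\mathbf t}$ (your $f_c$, up to sign and with the exponent written as $q^c$), then conclude with Proposition \ref{proposicao17denovembro1} and Theorem \ref{classicisom}; your direct evaluation in $R$ rather than passing through $Id_G(D)$, and your explicit check that $m=m'$, only fill in details the paper leaves implicit.
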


\begin{proof}
It is obvious that two isomorphic graded algebras satisfy the same graded identities.

Suppose that $Id_G(R)=Id_G(R')$. By Theorem \ref{classicisom}, we can assume $R=\mathcal{M}(T,b,\kappa,\gamma)=M_t(\mathbb{F})\otimes D$ and $R^{\prime}=\mathcal{M}(T^{\prime},b^{\prime},\kappa^{\prime},\gamma^{\prime})=M_{t^{\prime}}(\mathbb{F})\otimes D^{\prime}$ as before. By Proposition \ref{field} and Lemma \ref{18deoutubroa}, we have 
\begin{align}\label{eqsup}
T=\mathrm{Supp}\, D=\mathrm{Supp}\, D^{\prime}=T'.
\end{align}
The above equality, together with Remark \ref{r} imply that $D=D_{T,b}=D_{T, \mathbb{E}_r,\psi^{(r,b)}}$ and $D^{\prime}=D_{T^{\prime},b^{\prime}}=D_{T,b^{\prime}}=D_{T, \mathbb{E}_r,\psi^{(r,b^{\prime})}}$. In particular $D=D^{\prime}$ if $b=b^{\prime}$.

\


\noindent \textit{Claim 1.} $b=b^{\prime}$ and therefore $D=D^{\prime}$.

\noindent \textit{Proof of Claim 1}.
The result follows if we prove that $\psi^{(r,b)}_{\textbf{t}}=\psi^{(r,b^{\prime})}_{\textbf{t}}$, where $\textbf{t}$ is a generator of $T$. By the comments before Theorem \ref{teorema2de16denovembro}, there exist integers $l$, $u\geq 0$ such that  
\[
\psi^{(r,b)}_{\textbf{t}}(x)=x^{p^l} \ \ \mbox{and} \ \ \psi^{(r,b^{\prime})}_{\textbf{t}}(x)=x^{p^u},
\]
for every $x\in \mathbb{E}_r$. Given an integer $v\geq 0$, by (\ref{equacaodofm}) we obtain that
\[
\left(x_{2,\textbf{t}}x_{1,e}-x_{1,e}^{p^v}x_{2,\textbf{t}}\right)\in Id_G(D) \ \mbox{if and only if} \ \left( x_{2,\textbf{t}}F_m-(F_m)^{p^v}x_{2,\textbf{t}} \right) \in Id_G(R).
\] 
Here, $F_m=F_m(x_{1,e},y_{1,e},\ldots, y_{m,e})$ is the  Formanek polynomial.
The analogous remark holds for the algebras $D^{\prime}$ and $R^{\prime}$. Hence 
\[
\left(x_{2,\textbf{t}}x_{1,e}-x_{1,e}^{p^v}x_{2,\textbf{t}}\right)\in Id_G(D) \ \mbox{if and only if} \ \left(x_{2,\textbf{t}}x_{1,e}-x_{1,e}^{p^v}x_{2,\textbf{t}}\right)\in Id_G(D').
\]
Since $\left(x_{2,\textbf{t}}x_{1,e}-x_{1,e}^{p^l}x_{2,\textbf{t}}\right)$ and $\left(x_{2,\textbf{t}}x_{1,e}-x_{1,e}^{p^u}x_{2,\textbf{t}}\right)$ are $G$-graded identities for $D$ and $D^{\prime}$, respectively, as a consequence $x^{p^l}=x^{p^u}$ for every $x\in \mathbb{E}_r$. Hence $\psi^{(r,b)}_{\textbf{t}}=\psi^{(r,b^{\prime})}_{\textbf{t}}$ as desired.

\

From Equality (\ref{eqsup}), Claim 1, Proposition \ref{proposicao17denovembro1},  and Theorem \ref{classicisom}, we obtain that $R$ and $R'$ are isomorphic 
as $G$-graded algebras. This concludes the proof.
\end{proof}


\end{document}